\theoremstyle{plain}
\newtheorem{theorem}{Theorem}
\newtheorem{cor}{Corollary}
\newtheorem{lemma}{Lemma}
\newtheorem{utv}{Proposition}
\newtheorem{definition}{Definition}
\newcommand{\footremember}[2]{%
    \footnote{#2}
    \newcounter{#1}
    \setcounter{#1}{\value{footnote}}%
}
\title{The Hirzebruch-Mumford covolume of some hermitian lattices}
\author{%
  Stuken Ekaterina\footremember{alley}{Chebyshev Laboratory, St. Petersburg State University, 14th Line V.O., 29B, Saint Petersburg 199178
Russia.

  }}
\begin{document}

\maketitle



\begin{abstract}
Let $L=diag(1,1,\ldots,1,-1)$ and $M=diag(1,1,\ldots,1,-2)$ be the lattices of signature $(n,1)$. We consider the groups $\Gamma=SU(L,\mathcal{O}_K)$ and $\Gamma'=SU(M,\mathcal{O}_K)$ for an imaginary quadratic field $K=\mathbb{Q}(\sqrt{-d})$ of discriminant $D$ and it's ring of integers $\mathcal{O}_K$, $d$ odd and square free. We compute the Hirzebruch-Mumford volume of the factor spaces $\mathbb{B}^n/\Gamma$ and $\mathbb{B}^n/\Gamma'$. The result for the factor space $\mathbb{B}^n/\Gamma$ is due to Zeltinger \cite{Zel}, but as we're using it to prove the result for $\mathbb{B}^n/\Gamma'$ and it is hard to find his article, we prove the first result here as well.
\end{abstract}

\section*{Introduction}
Let $L$ be the lattice $diag(1,1,\ldots,1,-1)$ of signature $(n,1)$. We consider the group $\Gamma=SU(L,\mathcal{O}_K)$ for the ring of integers $\mathcal{O}_K$ of an imaginary quadratic field $K=\mathbb{Q}(\sqrt{-d})$ of discriminant $D$, $d$ odd and square free. Denote by $Vol_{HM}(\mathbb{B}^n/\Gamma)$ the Hirzebruch-Mumford volume of the factor space $\mathbb{B}^n/\Gamma$. We denote by $L(k)$ the L-function of the quadratic field $K$ with character $\chi_D(p)=\left(\frac{D}{p}\right)$ (Kronecker symbol). We prove the following theorem:
\begin{theorem}\label{L}
The Hirzebruch-Mumford volume of the factor space $\mathbb{B}^n/\Gamma$ equals
\begin{table}[h]
\centering
	\begin{tabular}{|c|c|c|}
		\hline
		{\bf n} & {\bf D} & {\bf $Vol_{HM}(\mathbb{B}^n/\Gamma)$} \\
	 	\hline
	 	{\bf even} &  {\bf -4d } & {\bf $D^{\frac{n^2+3n}{4}}\cdot \prod\limits_{j=1}^{n}\frac{j!}{(2\pi)^{j+1}}\cdot \zeta(2)\cdot L(3)\cdot \zeta(4)\cdot L(5)\cdot \ldots \cdot L(n+1)$}   \\
	 	\hline
	 	{\bf even} &  {\bf -d } & {\bf $D^{\frac{n^2+3n}{4}}\cdot \prod\limits_{j=1}^{n}\frac{j!}{(2\pi)^{j+1}}\cdot \zeta(2)\cdot L(3)\cdot \zeta(4)\cdot L(5)\cdot \ldots \cdot L(n+1)$}   \\
	 	\hline
	 	{\bf odd} &  {\bf -4d} & {\bf $D^{\frac{n^2+3n}{4}}\cdot(1-2^{-(n+1)})\cdot \prod\limits_{p|d} (1+\left( \frac{(-1)^{(n+3)/2}}{p}\right)\cdot p^{-\frac{n+1}{2}}) \prod\limits_{j=1}^{n}\frac{j!}{(2\pi)^{j+1}}\cdot \zeta(2)\cdot  \ldots \cdot \zeta(n+1)$}   \\
	 	\hline
	 	{\bf odd} &  {\bf -d} & {\bf $D^{\frac{n^2+3n}{4}}\cdot \prod\limits_{p|d} (1+\left( \frac{(-1)^{(n+3)/2}}{p}\right)p^{-\frac{n+1}{2}}) \prod\limits_{j=1}^{n}\frac{j!}{(2\pi)^{j+1}}\cdot \zeta(2)\cdot L(3)\cdot  \ldots \cdot \zeta(n+1)$}      \\
	 	\hline
	\end{tabular}
\end{table}
\end{theorem}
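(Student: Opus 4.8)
The plan is to reduce the computation to Prasad's volume formula. By the definition of the Hirzebruch--Mumford volume through the Hirzebruch--Mumford proportionality principle, $Vol_{HM}(\mathbb{B}^n/\Gamma)$ equals, up to the fixed proportionality constant attached to the compact dual $\mathbb{P}^n$, the covolume of $\Gamma$ with respect to the Euler--Poincar\'e measure on $G(\mathbb{R})$, where $G=SU(L)$. Since $G$ is semisimple and simply connected, its Tamagawa number is $1$, so (equivalently, by Harder's Gauss--Bonnet theorem together with this Tamagawa number, or directly by Prasad's formula) this covolume is the one that Prasad's volume formula assigns to the principal arithmetic subgroup $SU(L,\widehat{\mathcal{O}_K})=\prod_p SU(L,\mathbb{Z}_p)$ of $G(\mathbb{A}_{\mathbb{Q}})$. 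The task is therefore to evaluate each factor of that formula for our explicit lattice.

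First I would fix the arithmetic input. Over $\mathbb{Q}$ the group $G=SU(L)$ is an inner form of the quasi-split unitary group $G^{\ast}$ in $n+1$ variables split by $K=\mathbb{Q}(\sqrt{-d})$; it has absolute type $A_n$ with exponents $1,2,\dots,n$, and for $n\ge 2$ it is the outer form ${}^2A_n$. Reading off Prasad's formula: the archimedean factor is $\prod_{j=1}^{n}\frac{j!}{(2\pi)^{j+1}}$, coming from the exponents; the product of special values is $\zeta(2)\cdot L(3)\cdot \zeta(4)\cdot L(5)\cdots$ up to argument $n+1$, the even arguments carrying the trivial character (hence $\zeta$) and the odd arguments carrying $\chi_D$ (hence $L$), as dictated by the $\mathrm{Gal}(K/\mathbb{Q})$-action on the cocharacter lattice; and the Tamagawa number contributes $1$. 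Passing from the values at non-positive integers naturally produced by the Euler--Poincar\'e computation to the values in the statement via the functional equations of $\zeta$ and $L(\cdot,\chi_D)$ is what cancels the remaining powers of $\pi$ and, together with the discriminant prefactor of Prasad's formula and the conductors $|D|$ of the odd-argument $L$-functions, consolidates the power of $D$ into $D^{\frac{n^2+3n}{4}}$.

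The computational core, and the only step where the four rows of the table separate, is the evaluation of the local factors $e(P_p)$ for $L=\mathrm{diag}(1,\dots,1,-1)$. For every prime $p\nmid D$ the lattice $L\otimes\mathbb{Z}_p$ is unimodular, the stabilizer $SU(L,\mathbb{Z}_p)$ is hyperspecial, and $e(P_p)$ is exactly the $p$-Euler factor already bundled into the product of $\zeta$- and $L$-values, so no correction appears. For $p\mid D$ the extension $K_p/\mathbb{Q}_p$ is ramified; I would compute the Jordan splitting of the hermitian lattice $L\otimes\mathcal{O}_{K_p}$, identify the parahoric type of $SU(L,\mathbb{Z}_p)$ and the reductive quotient of its special fibre, and read off $e(P_p)$ as the corresponding local density. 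When $n$ is even the rank $n+1$ is odd, the unimodular hermitian $\mathcal{O}_{K_p}$-lattice of the relevant determinant is unique, the parahoric is again hyperspecial, and there is no correction (rows 1 and 2). When $n$ is odd the rank $n+1$ is even, so the class of $\det L=-1$ in $\mathbb{Q}_p^{\times}/N(K_p^{\times})$ becomes a genuine invariant, and the reductive quotient is an orthogonal group whose type is governed by that class; its point count over $\mathbb{F}_p$ supplies the extra factor $1+\left(\frac{(-1)^{(n+3)/2}}{p}\right)p^{-\frac{n+1}{2}}$ at each odd $p\mid d$, and an additional dyadic factor $1-2^{-(n+1)}$ at $p=2$ exactly when $2$ ramifies in $K$, i.e.\ when $D=-4d$ (rows 3 and 4).

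I expect the dyadic case --- determining $SU(L,\mathbb{Z}_2)$ and the factor $e(P_2)$ when $2$ ramifies in $K$ --- to be the main obstacle, because hermitian lattices over ramified dyadic extensions admit no orthogonal Jordan decomposition and their local densities must be handled through the dyadic theory of Jordan chains; this is precisely the computation responsible for the somewhat unexpected factor $1-2^{-(n+1)}$. Once all $e(P_p)$ are in hand, the theorem follows by multiplying together the archimedean factor $\prod_{j=1}^{n}\frac{j!}{(2\pi)^{j+1}}$, the discriminant power $D^{\frac{n^2+3n}{4}}$, the completed $\zeta$- and $L$-values, the Tamagawa number $1$, and the finitely many local corrections, and matching the outcome with each row of the table.
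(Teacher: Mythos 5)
Your plan is sound and would reach the stated answer, but it routes the computation through Prasad's volume formula, whereas the paper assembles the same ingredients by hand. The paper uses the Tamagawa number $\tau(SU)=1$ to write $\tau_\infty(G_{\mathbb{R}}/G_{\mathbb{Z}})=1/\prod_p\tau_p(G_{\mathbb{Z}_p})$, computes each $\tau_p$ as a normalized point count $\#SU(L,\mathcal{O}_K/p^N\mathcal{O}_K)$ via Hensel's lemma and the local-density results of Gan--Yu (odd $p$) and Cho (ramified $p=2$), and then converts $\tau_\infty$ into the Euler--Poincar\'e normalization explicitly: the Gram determinant of the trace form on an integral basis of the Lie algebra (which is where $D^{\frac{n^2+3n}{4}}$ and the case split $d\equiv 1,3\pmod 4$ actually enter), Macdonald's formula for $Vol(SU(n))$, and Chern's Gauss--Bonnet with curvature $k=-2$. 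Prasad's formula absorbs all of that archimedean bookkeeping, which is a genuine economy; the trade-off is that the part of the argument that actually separates the four rows of the table --- the local factors at the ramified primes, above all the dyadic one --- is exactly the part you defer to ``I would compute,'' and it is where the paper spends most of its effort (Lemmas \ref{index}--\ref{p=2}): one must pass from $U$ to $SU$ via the index over $\mathcal{O}_K/p^k\mathcal{O}_K$, and at a ramified $2$ one needs the level $N=3$, the kernel of the reduction $SU(L,\mathcal{O}/2^3\mathcal{O})\to SU(L,\mathcal{O}/2^2\mathcal{O})$, and Cho's dyadic classification to extract $2^{-n}\prod_{i=1}^{[n/2]}(1-2^{-2i})$ and hence the factor $1-2^{-(n+1)}$. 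One small inaccuracy in your sketch: for $p\mid D$ the group $SU(L,\mathbb{Q}_p)$ is a ramified unitary group and has no hyperspecial parahoric subgroups, so for even $n$ the lattice stabilizer is at best special; the local correction there still turns out to be trivial (the reductive quotient is an odd orthogonal group whose point count reproduces the Euler factors of $\zeta(2)\zeta(4)\cdots$), but not for the reason you give.
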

\vspace{-0.5cm}

Using theorem \ref{L}, we compute the $Vol_{HM}(\mathbb{B}^n/\Gamma')$ for the group $\Gamma'=SU(M,\mathcal{O}_K)$, where the lattice $M=diag(1,1,\ldots,1,-2)$, and prove the following theorem:

\begin{theorem}\label{M}
The Hirzebruch-Mumford volume of the factor space $\mathbb{B}^n/\Gamma'$ equals
\begin{table}[h]
\centering
	\begin{tabular}{|c|c|c|}
		\hline
		{\bf n} & {\bf D} & {\bf $Vol_{HM}(\mathbb{B}^n/\Gamma')$} \\
	 	\hline
	 	{\bf even} &  {\bf -4d } & {\bf $D^{\frac{n^2+3n}{4}}\cdot \prod\limits_{j=1}^{n}\frac{j!}{(2\pi)^{j+1}}\cdot \zeta(2)\cdot L(3)\cdot \zeta(4)\cdot L(5)\cdot \ldots \cdot L(n+1)\cdot (2^n-1)$}   \\
	 	\hline
	 	{\bf even} &  {\bf -d} & {\bf $D^{\frac{n^2+3n}{4}}\cdot \prod\limits_{j=1}^{n}\frac{j!}{(2\pi)^{j+1}}\cdot \zeta(2)\cdot L(3)\cdot \zeta(4)\cdot L(5)\cdot \ldots \cdot L(n+1)\cdot 2^n\cdot \frac{1-\left(\frac{-d}{2}\right)^{n+1}2^{-(n+1)} }{1-\left( \frac{-d}{2} \right)2^{-1}}$}   \\
	 	\hline
	 	{\bf odd} &  {\bf -4d} & {\bf $D^{\frac{n^2+3n}{4}}\cdot(1-2^{-(n+1)})\cdot \prod\limits_{p|d} (1+\left( \frac{(-1)^{(n+3)/2}\cdot 2}{p}\right)\cdot p^{-\frac{n+1}{2}}) \prod\limits_{j=1}^{n}\frac{j!}{(2\pi)^{j+1}}\cdot 2^n \zeta(2)  \ldots \zeta(n+1)$}   \\
	 	\hline
	 	{\bf odd} &  {\bf -d} & {\bf $D^{\frac{n^2+3n}{4}}\cdot \prod\limits_{p|d} (1+\left( \frac{(-1)^{(n+3)/2}\cdot 2}{p}\right)p^{-\frac{n+1}{2}}) \prod\limits_{j=1}^{n}\frac{j!}{(2\pi)^{j+1}}\cdot 2^n\frac{1-\left(\frac{-d}{2}\right)^{n+1}2^{-(n+1)} }{1-\left( \frac{-d}{2} \right)2^{-1}}\zeta(2)  \ldots  \zeta(n+1)$}      \\
	 	\hline
	\end{tabular}
\end{table}
\end{theorem}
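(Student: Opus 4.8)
The plan is to deduce Theorem~\ref{M} from Theorem~\ref{L} by a prime-by-prime comparison of the two lattices. The volume formula behind Theorem~\ref{L} (Prasad's volume formula, equivalently Zeltinger's computation through local densities) expresses, for any hermitian $\mathcal{O}_K$-lattice $N$ of signature $(n,1)$,
\[
Vol_{HM}(\mathbb{B}^n/SU(N,\mathcal{O}_K)) \;=\; C_\infty(n)\cdot C_K(n)\cdot\prod_p \beta_p(N),
\]
where $C_\infty(n)=\prod_{j=1}^n\frac{j!}{(2\pi)^{j+1}}$ is the archimedean factor, $C_K(n)$ is the global factor carrying the power $D^{(n^2+3n)/4}$ and the relevant special values of $\zeta$ and $L$, and $\beta_p(N)$ is a local density equal to $1$ for all but finitely many $p$ and depending only on the isometry class of $N_p=N\otimes_{\mathcal{O}_K}\mathcal{O}_{K,p}$. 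Since $\Gamma=SU(L,\mathcal{O}_K)$ and $\Gamma'=SU(M,\mathcal{O}_K)$ sit inside the same real group $SU(n,1)$ and the hermitian spaces $L\otimes K$, $M\otimes K$ have the same rank, signature and ground field — so that the underlying $\mathbb{Q}$-forms are inner forms of the same quasi-split group — the factors $C_\infty(n)$ and $C_K(n)$ are the same for $L$ and $M$. Hence
\[
Vol_{HM}(\mathbb{B}^n/\Gamma') \;=\; Vol_{HM}(\mathbb{B}^n/\Gamma)\cdot\prod_p\frac{\beta_p(M)}{\beta_p(L)},
\]
and it remains to evaluate this finite product and multiply the table of Theorem~\ref{L} by it.

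Next I would localize the product to the primes dividing $2d$. For $p\nmid 2d$ one has $L_p\cong M_p$ over $\mathcal{O}_{K,p}$, because at a split or inert prime $2$ is the norm of a unit, so $\langle-2\rangle\cong\langle-1\rangle$. If $n$ is even, $N$ has odd rank $n+1$, and multiplying the hermitian form by a suitable unit identifies $M_p$ with $L_p$ at every odd $p$ (for $n+1$ odd the map $\lambda\mapsto\lambda^{n+1}$ on $\mathbb{Z}_p^\times/N(\mathcal{O}_{K,p}^\times)$ is surjective, so both discriminant classes are reached), whence $\beta_p(M)=\beta_p(L)$ there; thus for $n$ even only $p=2$ survives, and in fact $SU(M\otimes\mathbb{Q})\cong SU(L\otimes\mathbb{Q})$ — the spaces $L\otimes K$, $M\otimes K$ being similar — so the statement is then a commensurability-index computation. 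If $n$ is odd, then at an odd $p\mid d$ with $\left(\frac2p\right)=1$ one still has $L_p\cong M_p$, whereas at an odd $p\mid d$ with $\left(\frac2p\right)=-1$ the lattices $L_p$, $M_p$ are the two non-isometric unimodular hermitian $\mathcal{O}_{K,p}$-lattices of rank $n+1$; a short computation with the explicit local density of a unimodular hermitian lattice over the ramified extension $K_p/\mathbb{Q}_p$ yields
\[
\frac{\beta_p(M)}{\beta_p(L)} \;=\; \frac{1-\left(\frac{(-1)^{(n+3)/2}}{p}\right)p^{-(n+1)/2}}{1+\left(\frac{(-1)^{(n+3)/2}}{p}\right)p^{-(n+1)/2}},
\]
which is exactly the factor that replaces the Euler factor $1+\left(\frac{(-1)^{(n+3)/2}}{p}\right)p^{-(n+1)/2}$ of Theorem~\ref{L} by $1+\left(\frac{(-1)^{(n+3)/2}\cdot2}{p}\right)p^{-(n+1)/2}$.

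Then it remains to compute $\beta_2(M)/\beta_2(L)$, case by case. If $D=-4d$ the prime $2$ is ramified: writing $2=\varepsilon\varpi^2$ for a uniformizer $\varpi$ of $\mathcal{O}_{K,2}$, the lattice $M_2=\langle1\rangle^n\perp\langle-2\rangle$ has a $\mathfrak{p}^2$-modular rank-one Jordan component, while $L_2=\langle1\rangle^n\perp\langle-1\rangle$ is unimodular. If $D=-d$ the prime $2$ is inert or split, $M_2$ is unimodular of rank $n+1$, and it lies in the class of $L_2$ or in the other class according as $2\in N(K_2^\times)$ or not. In each of the four cases I would evaluate the two dyadic hermitian local densities — either from the known explicit formulas for local densities of hermitian lattices over ramified and unramified quadratic extensions of $\mathbb{Q}_2$, or, since these lattices are so explicit, by a direct smooth-model and point-counting computation — obtaining $\beta_2(M)/\beta_2(L)=2^n-1$ when $n$ is even and $D=-4d$, the factor $2^n\cdot\frac{1-\left(\frac{-d}{2}\right)^{n+1}2^{-(n+1)}}{1-\left(\frac{-d}{2}\right)2^{-1}}$ when $n$ is even and $D=-d$, and a factor $2^n$ in the two $n$-odd cases. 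Feeding these ratios, together with the odd-prime ratios of the previous step, into the formula of Theorem~\ref{L} reproduces the four entries of Theorem~\ref{M}.

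The hard part is the dyadic local density computation. Hermitian lattices over ramified quadratic extensions of $\mathbb{Q}_2$ behave badly because of wild ramification: one must fix $\mathcal{O}_{K,2}$ and a uniformizer explicitly, pin down the Jordan splitting of $M_2$, and make sure that the normalization of $\beta_2$ is the one implicit in the formula used for Theorem~\ref{L}, so that the ratio one writes down is the honest one. The localization step and the computations at the odd ramified primes are, by comparison, routine bookkeeping.
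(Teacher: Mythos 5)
Your overall strategy---freeze the archimedean and global factors and compare $L$ with $M$ prime by prime at $p\mid 2d$---is precisely the paper's strategy (the paper writes $Vol_{HM}(\mathbb{B}^n/\Gamma')=Vol_{HM}(\mathbb{B}^n/\Gamma)\cdot\frac{\tau_2(G_{\mathbb{Z}_2})}{\tau_2(G'_{\mathbb{Z}_2})}\cdot\prod_{p\mid d}\frac{\tau_p(G_{\mathbb{Z}_p})}{\tau_p(G'_{\mathbb{Z}_p})}\cdot\frac{\det(B_M)}{\det(B_L)}$ and evaluates each ratio), and your analysis at the odd primes matches the paper's lemmas. The problem is at $p=2$. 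Your claim that for $D=-d$ the lattice $M_2$ is unimodular is false: $2$ is a uniformizer of $\mathbb{Z}_2$ regardless of whether it is inert or split in $K$, so $M_2=\langle1\rangle^n\perp\langle-2\rangle$ has a nontrivial Jordan splitting $M_0\oplus M_1$ with a rank-one $2$-modular block; it is not locally isometric to $L_2$ up to choice of unimodular class, and the comparison is not the one you describe. The paper computes this case via Gan--Yu with exactly that Jordan decomposition ($d_1=1$), and the resulting density does not depend on the parity of $n$.

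As a consequence, your asserted dyadic ratio ``a factor $2^n$ in the two $n$-odd cases'' is wrong for $n$ odd, $D=-d$: the correct factor is $2^n\cdot\frac{1-\left(\frac{-d}{2}\right)^{n+1}2^{-(n+1)}}{1-\left(\frac{-d}{2}\right)2^{-1}}$, i.e.\ the same expression as in your $n$-even, $D=-d$ case, which for $n$ odd equals $2^n\cdot\frac{1-2^{-(n+1)}}{1-\left(\frac{-d}{2}\right)2^{-1}}\neq 2^n$. Feeding $2^n$ into Theorem~\ref{L} does not reproduce the fourth row of the table in Theorem~\ref{M}; the dyadic factor is exactly $2^n$ only in the ramified case $D=-4d$ with $n$ odd, where $\tau_2(G'_{\mathbb{Z}_2})=\tau_2(G_{\mathbb{Z}_2})$ and all of $2^n$ comes from the change in the lattice discriminant (the Killing-form determinant). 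Two smaller but real points of the same kind: you call $\beta_p$ a ``local density'' yet place it in the numerator of the covolume, whereas densities enter reciprocally, so your displayed ratio is inverted relative to your own normalization; and converting the $U$-densities of Gan--Yu/Cho into $SU$-point counts requires knowing that $[U(M):SU(M)]$ at $2$ is \emph{twice} $[U(L):SU(L)]$ (the paper's lemma on the index for $M$, coming from the extra solutions of $2\,\mathrm{Norm}(\det X)=2$ modulo $2^k$) --- this is part of the normalization you flag as delicate but do not resolve, and omitting it changes the answer by a power of $2$.
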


\section*{Acknowledgments} The author is very grateful to O.V. Schwarzman for suggesting the problem and numerous valuable conversations. The author was supported by RSF grant no. 19-71-30002 and by Chebyshev Laboratory, St. Petersburg State University.

\section*{Local measures $\tau_p$} In this section we remind the notion of a local measure and some well-known facts about local measures.
Consider the group $G=SU(L,K)$. More explicitly, $G$ is the group of matrices $A$ such that $\begin{cases} AL\bar{A}'=L, \\ \det(A)=1.\end{cases}$ The Lie algebra $\mathcal{L}$ of the group $G$ is defined by the system $\begin{cases} BL+L\bar{B}'=0,\\ Tr(B)=0.\end{cases}$ Let $\mathcal{L}_\mathbb{Z}$ be the elements of $\mathcal{L}$ with coefficients in $\mathcal{O}_K$. Consider the matrix norm $||A||=max|a_{ij}|_p$ in the Lie algebra $\mathcal{L}_p=\mathcal{L}_{\mathbb{Q}}\otimes \mathbb{Q}_p$. Let $B(k)=\{ A, ||A||<\frac{1}{p^k}\}$ be a ball in $\mathcal{L}_p$ of a sufficiently small radius. Denote by $G_{\mathbb{Z}_p}^{(k)}$ the image of $B(k)$ under the exponential map. The following facts are well-known and can be found, for example, in \cite{Shv}:
\begin{utv}\label{tau_p from balls}
\textbf{a) }The group $G_{\mathbb{Z}_p}^{(k)}$ is a congruence subgroup of $G_{\mathbb{Z}_p}$ modulo $p^k$ for $k\geq 1$ and $p\neq 2$ or $k\geq 2$ and $p=2$.\\
\textbf{b) }The manifold $G_{\mathbb{Z}_p}$ splits into a disjoint union of $[G_{\mathbb{Z}_p}:G_{\mathbb{Z}_p}^{(k)}]$ balls of the same volume.\\
\textbf{c) } The volume of $G_{\mathbb{Z}_p}^{(k)}$ is equal to the volume of $B(k)$ starting from $k=1$ for $p\neq 2$ and from $k=2$ for $p=2$. 
\end{utv}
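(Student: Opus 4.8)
The plan is to treat $G$ as a linear algebraic group over $\mathbb{Q}$ with the integral model $G_{\mathbb{Z}_p}$ cut out inside $GL_N(\mathbb{Z}_p)$ by $AL\bar A'=L$ and $\det A=1$, and to build an analytic chart around the identity from the $p$-adic exponential $\exp(X)=\sum_{m\ge0}X^m/m!$ and logarithm $\log(I+Y)=\sum_{m\ge1}(-1)^{m+1}Y^m/m$. The three tools are: convergence of these series on a ball of sufficiently small radius (which, in the normalization of the statement, means $B(k)$ with $k\ge1$ for $p$ odd and $k\ge2$ for $p=2$); the Baker--Campbell--Hausdorff identity $\exp(X)\exp(Y)=\exp(X*Y)$ with $X*Y=X+Y+\tfrac12[X,Y]+\cdots$; and the change-of-variables formula, under which the Jacobian of $\exp$ at $X$ is $\det\!\left(\frac{1-e^{-\operatorname{ad}X}}{\operatorname{ad}X}\right)$. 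Throughout one uses that the structure constants of $\mathcal{L}$ in an $\mathcal{O}_K$-basis of $\mathcal{L}_{\mathbb{Z}}$ are $p$-integral, which is what keeps all the $p$-adic estimates under control.

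For part (a): first one checks that $\exp$ maps $B(k)$ bijectively onto a subset of $G_{\mathbb{Z}_p}$ with inverse $\log$ --- the series converge to matrices in $I+p\operatorname{Mat}_N(\mathbb{Z}_p)$, the image satisfies the defining equations, and $\log$ sends such matrices back into $B(k)$ and into $\mathcal{L}$ because it intertwines the algebraic relations ($AL\bar A'=L,\ \det A=1$ become $XL+L\bar X'=0,\ \operatorname{Tr}X=0$). Then BCH shows that $B(k)$ is stable under $*$ (the iterated brackets have strictly smaller norm) and under $X\mapsto -X$, so $G_{\mathbb{Z}_p}^{(k)}=\exp(B(k))$ is a subgroup; it is normal because $g\exp(X)g^{-1}=\exp(\operatorname{Ad}(g)X)$ and $\operatorname{Ad}(g)$ preserves $\mathcal{L}_{\mathbb{Z}_p}$, hence $B(k)$, for $g\in G_{\mathbb{Z}_p}$. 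Comparing $\exp(X)\equiv I+X$ and $\log(I+Y)\equiv Y$ modulo higher-order terms then identifies $G_{\mathbb{Z}_p}^{(k)}$ with the kernel of reduction $G_{\mathbb{Z}_p}\to G_{\mathbb{Z}/p^k\mathbb{Z}}$, i.e. the principal congruence subgroup mod $p^k$.

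For part (b): $G_{\mathbb{Z}_p}$ is a compact $p$-adic analytic group and $G_{\mathbb{Z}_p}^{(k)}$ an open, hence finite-index, subgroup; decomposing $G_{\mathbb{Z}_p}$ into its $[G_{\mathbb{Z}_p}:G_{\mathbb{Z}_p}^{(k)}]$ left cosets $g_i\exp(B(k))$ and using left-invariance of $\tau_p$ gives equal volumes, each coset being the image of the genuine ball $B(k)$ under the chart $X\mapsto g_i\exp(X)$. For part (c): $\tau_p$ is attached to a left-invariant top-degree form on $G$ normalized to agree with the coordinate volume form of $\mathcal{L}$ at the identity, so its pullback along $\exp\colon B(k)\to G_{\mathbb{Z}_p}^{(k)}$ is $|J(X)|_p\,dX$ with $J(X)=\det\!\left(\frac{1-e^{-\operatorname{ad}X}}{\operatorname{ad}X}\right)$. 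For $X\in B(k)$ the matrix $\operatorname{ad}X$ has entries in $p\mathbb{Z}_p$, so $J(X)\equiv 1\pmod p$ and $|J(X)|_p=1$; hence $\exp$ is volume-preserving and $\operatorname{vol}_{\tau_p}(G_{\mathbb{Z}_p}^{(k)})=\operatorname{vol}(B(k))$ --- and the $\tfrac12$'s in BCH and in $J(X)$ are precisely what force the range $k\ge2$ when $p=2$.

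The main obstacle is the careful $p$-adic bookkeeping in the convergence, BCH, and Jacobian estimates, and especially the borderline prime $p=2$: since $\exp$ and $\log$ converge only for $\|X\|_p<p^{-1/(p-1)}$, which equals $1/2$ at $p=2$, one genuinely must shrink to $B(2)$ there, and the same half-integers reappear in the group law and in the Jacobian. Once the integral model and its structure constants are fixed, the rest is routine $p$-adic Lie theory, which is why it is reasonable to content oneself with the reference \cite{Shv}.
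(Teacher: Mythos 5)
The paper offers no proof of this proposition at all --- it is stated as a collection of well-known facts and delegated to the reference \cite{Shv} --- so there is nothing internal to compare against; your outline is the standard $p$-adic Lie-theoretic argument (exponential chart inverse to $\log$ on $B(k)$, Baker--Campbell--Hausdorff for the group structure and coset decomposition, and the Jacobian $\det\bigl((1-e^{-\operatorname{ad}X})/\operatorname{ad}X\bigr)$ for the volume comparison), and it is correct in its essentials, including the reason the borderline prime $p=2$ forces $k\ge 2$. The one place where your wording is looser than the argument requires is the step ``$\operatorname{ad}X$ has entries in $p\mathbb{Z}_p$, so $J(X)\equiv 1\pmod p$'': the series $\sum_{m\ge 0}(-\operatorname{ad}X)^m/(m+1)!$ carries factorials in its denominators, so the honest estimate is $v_p\bigl((\operatorname{ad}X)^m/(m+1)!\bigr)\ge km-m/(p-1)>0$ for $m\ge 1$, which is precisely the source of the dichotomy $k\ge 1$ for odd $p$ versus $k\ge 2$ for $p=2$ that you correctly flag elsewhere, so this is a presentational rather than a mathematical gap.
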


It follows that $$Vol_{\tau_p} (G_{\mathbb{Z}_p})= \begin{cases} \dfrac{N_p}{p^{(n+1)^2-1}},~ p\neq 2;\\
\dfrac{N_2}{4^{(n+1)^2-1}}, ~p=2,
 \end{cases}$$ where $N_p$ is the number of elements in $G_{\mathbb{Z}_p}/G_{\mathbb{Z}_p}^{(1)}$ for $p\neq 2$ and the order of $G_{\mathbb{Z}_2}/G_{\mathbb{Z}_2}^{(2)}$ for $p=2$. We need the following version (\cite{Shv}) of Hensel's lemma to compute $N_p$:
 \begin{theorem}\label{Hensel}
 Let $f=(f_1,\ldots, f_r)$ be a system of $r$ elements, $f_i\in \mathbb{Z}_p[x_1,\ldots,x_n]$. Denote by $M_f(a)$ the Jacobi matrix of this system in $a\in \mathbb{Z}_p^n$. Let $e$ be the greatest invariant factor of $M_f(a)$. Suppose that $f(a)=0 \pmod{e^2p}$. Then there exists a solution $\tilde{a}\in \mathbb{Z}_p^n$ of the system $f=0$ such that $\tilde{a}\equiv a\pmod{ep}$.
\end{theorem}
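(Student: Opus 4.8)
The plan is to prove Theorem~\ref{Hensel} by a $p$-adic Newton iteration arranged so that the failure of $M_f(a)$ to be invertible is absorbed entirely into the single parameter $e$. First observe that the statement is vacuous unless $M_f(a)$ has full row rank $r$: otherwise a zero invariant factor appears, $e$ is divisible by everything, and the hypothesis forces $f(a)=0$ outright. So assume $r\le n$ and $\mathrm{rank}\, M_f(a)=r$, and put $M_f(a)$ into Smith normal form over $\mathbb{Z}_p$, $M_f(a)=U\,D\,V$ with $U\in\mathrm{GL}_r(\mathbb{Z}_p)$, $V\in\mathrm{GL}_n(\mathbb{Z}_p)$ and $D$ the $r\times n$ matrix carrying the invariant factors $e_1\mid e_2\mid\dots\mid e_r=e$ on the diagonal. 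Replacing $f$ by $U^{-1}f$ --- which changes neither the zero locus nor the hypothesis $f(a)\equiv 0\pmod{e^2p}$, since $U^{-1}\in\mathrm{GL}_r(\mathbb{Z}_p)$ --- and substituting $x=V^{-1}y$, an automorphism of $\mathbb{Z}_p^n$ carrying the class $a+ep\,\mathbb{Z}_p^n$ onto $Va+ep\,\mathbb{Z}_p^n$, I may assume $M_f(a)=D$, i.e.\ $\partial f_i/\partial x_j(a)$ equals $e_i$ when $i=j$ and $0$ otherwise. Passing through Smith normal form here, rather than invoking the usual determinantal Hensel lemma, is exactly what yields the sharp hypothesis: it is the \emph{largest} invariant factor $e$, not the product $e_1\cdots e_r$, that governs the iteration.

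Next I would iterate while staying in these coordinates. Write $\nu=v_p(e)$, so the hypothesis reads $v_p(f_i(a))\ge 2\nu+1$ for all $i$; set $a^{(0)}=a$ and $a^{(m+1)}=a^{(m)}+t^{(m)}$ where $t^{(m)}_i:=-f_i(a^{(m)})/e_i$. The inductive claim to establish is that $v_p(f_i(a^{(m)}))\ge 2\nu+1+m$ for every $i$ and $a^{(m)}\equiv a\pmod{ep}$. Granting this for $m$, one has $v_p(t^{(m)}_i)\ge(2\nu+1+m)-v_p(e_i)\ge\nu+1+m$, because $e_i\mid e$; hence $t^{(m)}\in e\,p^{1+m}\mathbb{Z}_p^n$ and in particular $a^{(m+1)}\equiv a\pmod{ep}$. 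Taylor-expanding $f_i(a^{(m)}+t^{(m)})$ about $a^{(m)}$, split the linear term $\sum_j\partial f_i/\partial x_j(a^{(m)})\,t^{(m)}_j$ into $\sum_j\partial f_i/\partial x_j(a)\,t^{(m)}_j$ plus a drift governed by $\partial f_i/\partial x_j(a^{(m)})-\partial f_i/\partial x_j(a)$. Since $M_f(a)=D$ the first piece is $e_i t^{(m)}_i=-f_i(a^{(m)})$ and cancels $f_i(a^{(m)})$ exactly; the drift lies in $(ep)(e\,p^{1+m})\mathbb{Z}_p$ because $a^{(m)}\equiv a\pmod{ep}$ forces $\partial f_i/\partial x_j(a^{(m)})-\partial f_i/\partial x_j(a)\in ep\,\mathbb{Z}_p$; and the Taylor remainder, a $\mathbb{Z}_p$-combination of products of at least two of the $t^{(m)}_j$, lies in $(e\,p^{1+m})^2\mathbb{Z}_p$. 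Both error terms therefore have $p$-valuation at least $2\nu+2+m=2\nu+1+(m+1)$, which is the claim for $m+1$.

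Since $v_p(t^{(m)})\to\infty$, the sequence $(a^{(m)})$ is Cauchy in $\mathbb{Z}_p^n$; its limit $\tilde a$ satisfies $\tilde a\equiv a\pmod{ep}$ and $f_i(\tilde a)=\lim_m f_i(a^{(m)})=0$ for all $i$, which is the assertion. The part I expect to require the most care is not any isolated computation but the calibration that makes the induction close: every approximation must stay inside the ball $a+ep\,\mathbb{Z}_p^n$, on which $M_f$ differs from $D$ only by a matrix with entries in $ep\,\mathbb{Z}_p$, and one must check that there the two error sources --- the Jacobian drift and the Taylor remainder --- are \emph{both} dominated by the gain obtained from cancelling the linear term. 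This is exactly where the square $e^2$ in the hypothesis enters: it forces $v_p(t^{(0)})\ge\nu+1$, i.e.\ $a^{(1)}\in a+ep\,\mathbb{Z}_p^n$, which is what lets the induction get started. In the setting of this paper the relevant group scheme is smooth at the points in question, so $e$ is a unit and the whole iteration degenerates to the classical nonsingular Hensel lemma; the argument above is written out only to cover the general statement quoted from \cite{Shv}.
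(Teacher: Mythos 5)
The paper never proves this statement --- it is imported from \cite{Shv} as a black box --- so there is no in-paper argument to compare yours against; you are supplying the missing proof. Your core argument is correct and is the natural one: put $M_f(a)$ into Smith normal form over the discrete valuation ring $\mathbb{Z}_p$, absorb $U^{-1}$ into $f$ and $V$ into the coordinates (both harmless to the hypothesis and to the class $a+ep\,\mathbb{Z}_p^n$), and run a Newton iteration with step $t^{(m)}_i=-f_i(a^{(m)})/e_i$. The calibration closes: the hypothesis gives $v_p(f(a))\ge 2\nu+1$ with $\nu=v_p(e)$, each step has $v_p(t^{(m)})\ge\nu+1+m$ because $e_i\mid e$, and both error terms (the Jacobian drift on the ball $a+ep\,\mathbb{Z}_p^n$ and the quadratic Taylor remainder, whose coefficients lie in $\mathbb{Z}_p$ since the $f_i$ do) gain a power of $p$ per step, so the limit exists by completeness and is the required root. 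Two cosmetic points: you should state $t^{(m)}_j=0$ for $j>r$, and the degenerate case is not ``vacuous'' but trivially true with $\tilde a=a$.

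The one genuine soft spot is the opening reduction. The implication ``$M_f(a)$ fails to have full row rank $\Rightarrow$ a zero invariant factor appears, hence $e=0$'' is valid only when $r\le n$. If $r>n$, full row rank is impossible while all $\min(r,n)=n$ diagonal entries of the Smith form can be units, and then the statement as literally written is false: take $n=1$, $r=2$, $f_1=x$, $f_2=x-p^2$, $a=0$; the Smith form of $\left(\begin{smallmatrix}1\\1\end{smallmatrix}\right)$ is $\left(\begin{smallmatrix}1\\0\end{smallmatrix}\right)$, so $e=1$ and $f(a)\equiv 0\pmod{e^2p}$, yet $f_1=f_2=0$ has no common solution. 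So either one reads ``invariant factors'' as those of the cokernel $\mathbb{Z}_p^r/M_f(a)\mathbb{Z}_p^n$ (a list of length $r$ padded with zeros, which restores your dichotomy), or one adds the hypothesis that $M_f(a)$ has row rank $r$. This is not pure pedantry here: the defining equations of $SU(L)$ used in the paper (the Hermitian condition together with $\det=1$) are not independent, since unitarity already forces $\det A$ to have norm one, so the exhaustiveness of your initial dichotomy is exactly the point that needs a stated convention before the theorem can be applied as the paper applies it.
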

\begin{cor}\label{Hensel_cor}
Let $U_n$ be the set of solutions of the system $f=0$ in the ring $\mathbb{Z}_p/e^n p\mathbb{Z}_p$. Consider the set $U$ of solutions of the system $f=0$ in $\mathbb{Z}_p$ and the reduction map $\phi:U\to U_1$. Consider also the map $\phi_1:U_2\to U_1$. Then $\phi(U)=\phi_1(U_2)$. 
\end{cor}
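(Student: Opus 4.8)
The plan is to establish the set equality $\phi(U)=\phi_1(U_2)$ by proving the two inclusions separately; one is formal and the other rests entirely on Theorem \ref{Hensel}. Here $e$ denotes the fixed value of the greatest invariant factor of the Jacobi matrix along the relevant part of the solution locus (in the applications $e=1$ for $p\neq 2$ and $e=2$ for $p=2$), so that the rings $\mathbb{Z}_p/ep\mathbb{Z}_p$ and $\mathbb{Z}_p/e^2p\mathbb{Z}_p$ are well defined and the same $e$ serves every point in sight. The inclusion $\phi(U)\subseteq\phi_1(U_2)$ is immediate: since $f$ has $\mathbb{Z}_p$-coefficients, reduction modulo $e^2p$ commutes with evaluating $f$, hence sends an honest solution $\tilde a\in U$ to a solution modulo $e^2p$, i.e.\ to a point of $U_2$, and composing with the further reduction $U_2\to U_1$ recovers $\phi$; thus $\phi=\phi_1\circ(\mathrm{red}_{U\to U_2})$ and therefore $\phi(U)\subseteq\phi_1(U_2)$.

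For the reverse inclusion I would take an arbitrary $\bar b\in U_2$, view it as a point of $(\mathbb{Z}_p/e^2p\mathbb{Z}_p)^n$ with $f(\bar b)\equiv 0\pmod{e^2p}$, and choose any lift $a\in\mathbb{Z}_p^n$ of $\bar b$. Then $f(a)\equiv 0\pmod{e^2p}$, so Theorem \ref{Hensel} applies and produces $\tilde a\in\mathbb{Z}_p^n$ with $f(\tilde a)=0$ and $\tilde a\equiv a\pmod{ep}$. Hence $\tilde a\in U$, and since $a\equiv\bar b\pmod{e^2p}$ forces $a\equiv\bar b\pmod{ep}$, we get $\phi(\tilde a)=(\tilde a\bmod ep)=(\bar b\bmod ep)=\phi_1(\bar b)$. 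As $\bar b$ was arbitrary this gives $\phi_1(U_2)\subseteq\phi(U)$, and together with the first step this yields the claimed equality.

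The only genuine subtlety, and the step I expect to need the most care in the write-up, is the control of the Jacobian at the chosen lift: one must know that the greatest invariant factor of $M_f(a)$ divides the fixed $e$ used to define $U_1$ and $U_2$, so that the congruence $f(a)\equiv 0\pmod{e^2p}$ truly places us in the hypothesis of Theorem \ref{Hensel} rather than merely modulo $e(a)^2p$ for some unrelated $e(a)$. This is where the geometry of the situation enters — the locus $f=0$ is the unitary group scheme, whose smoothness away from $p=2$ keeps the rank of $M_f$ constant modulo $p$, the loss of smoothness at $p=2$ being absorbed into the choice $e=2$ there — and I would record this uniformity as the standing condition under which Corollary \ref{Hensel_cor} is invoked.
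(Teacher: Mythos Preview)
The paper states Corollary~\ref{Hensel_cor} without proof, treating it as an immediate consequence of Theorem~\ref{Hensel}; there is thus no argument to compare against, and your two-inclusion proof is exactly the intended one. Your treatment of the nontrivial inclusion $\phi_1(U_2)\subseteq\phi(U)$ --- lift an element of $U_2$ to $\mathbb{Z}_p^n$, invoke Theorem~\ref{Hensel}, and read off the congruence modulo $ep$ --- is correct, and your flagging of the uniformity condition on $e$ is on point: the corollary only makes sense once $e$ is fixed in advance, and in the paper's application the computation ``the invariant factors are $(1,1,\ldots,2,2)$'' supplies precisely this uniform bound.
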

One can easily check, that in our case the invariant factors are $(1,1,\ldots,2,2)$. Then it follows from the corollary that $N_p=\# SU(L,\mathcal{O}/p\mathcal{O})$ and $N_2=\# Im(\phi_1)=\dfrac{\# SU(L,\mathcal{O}/2^3\mathcal{O})}{\# ker(\phi_1)}$.

So, $$Vol_{\tau_p} (G_{\mathbb{Z}_p})= \begin{cases} \dfrac{\# SU(L,\mathcal{O}/p\mathcal{O})}{p^{(n+1)^2-1}},~ p\neq 2;\\
\dfrac{\# SU(L,\mathcal{O}/2^3\mathcal{O})}{4^{(n+1)^2-1}\cdot \# ker(\phi_1)}, ~p=2,
 \end{cases}$$

The following theorem is well-known (\cite{Wei}):
\begin{theorem}
The Tamagawa number of the group $SU$ equals 1.
\end{theorem}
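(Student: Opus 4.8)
The plan is to deduce the statement from Weil's computation of the Tamagawa numbers of classical groups, since proving it from scratch would essentially reproduce his monograph \cite{Wei}. The structural point is that $SU(L,K)$ is the special unitary group of the non-degenerate hermitian form $L$ over the imaginary quadratic extension $K/\mathbb{Q}$; viewed as an algebraic group over $\mathbb{Q}$ it is semisimple and simply connected --- it is an outer form of $SL_{n+1}$, of type ${}^{2}\!A_{n}$, which is also why its dimension equals $(n+1)^2-1$, as used above. By Weil's theorem on Tamagawa numbers --- the Tamagawa number of a semisimple simply connected group over a number field equals $1$, a fact Weil established for all the classical families, and in particular for unitary groups of hermitian forms, in \cite{Wei} --- we obtain $\tau(SU)=1$. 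So the proof is a reference; what follows only recalls the mechanism and explains why this is the value we need.

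Concretely, one fixes a left-invariant differential form $\omega$ of top degree on $SU$ defined over $\mathbb{Q}$. At each place $v$ it gives a local measure $|\omega|_v$ on $SU(\mathbb{Q}_v)$, and since the group is simply connected the Euler product $\prod_p \mathrm{vol}_{|\omega|_p}\big(SU(\mathbb{Z}_p)\big)$ converges absolutely; hence no convergence factors are needed and $\prod_v |\omega|_v$ is a well-defined measure on the adele group --- the Tamagawa measure, independent of $\omega$ by the product formula. One then has to evaluate $\mathrm{vol}\big(SU(\mathbb{Q})\backslash SU(\mathbb{A})\big)$ for this measure. Weil does this by applying the Poisson summation formula on the $\mathbb{Q}$-vector space of hermitian matrices; the resulting Siegel--Weil identity for hermitian forms, together with the known mass of the genus of $L$, yields the value $1$. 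An alternative route uses the exact sequence $1\to SU\to U\to U_1\to 1$, with $U_1=\mathrm{Res}^{1}_{K/\mathbb{Q}}\mathbb{G}_m$ the norm-one torus, the multiplicativity of Tamagawa numbers in such a sequence, and the known values of $\tau(U)$ and $\tau(U_1)$.

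The genuinely hard part of a self-contained argument would be precisely the content of Weil's proof: the absolute convergence of the product of local volumes, the exact evaluation of the archimedean integral, and the Siegel--Weil comparison between adelic volumes and finite class-number data --- none of which we reprove here. The only thing used downstream is the numerical value $\tau(SU)=1$: it is what allows us to write the Hirzebruch--Mumford volume of $\mathbb{B}^n/\Gamma$ as the product of the local measures $Vol_{\tau_p}\big(G_{\mathbb{Z}_p}\big)$ computed above with the archimedean factor, which the remainder of the paper evaluates place by place.
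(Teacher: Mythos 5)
Your proposal is correct and matches the paper's treatment: the paper also gives no proof, simply citing Weil \cite{Wei} for the fact that the Tamagawa number of $SU$ equals $1$. Your additional remarks on why $SU(L,K)$ falls under Weil's classical-group computations (semisimple, simply connected, type ${}^{2}\!A_{n}$) are accurate and only elaborate on the same reference.
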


It means that $\tau_{\infty}(G_{\mathbb{R}}/G_{\mathbb{Z}})=\dfrac{1}{\prod\limits_{p\neq \infty}\tau_p(G_{\mathbb{Z}_p})}$.


\section*{Hirzebruch-Mumford volume computation}
Consider the bilinear form $B=Tr(XY)$ on the Lie algebra $\mathcal{L}$. Let $\mathcal{L}=k\oplus m$ be the Cartan decomposition. Denote by $\omega_1$ the volume form on $k$ and by $\omega_2$ the volume form on $m$ constructed by the bilinear form $B$. Let $\omega^{(d)}=\omega_1 \cdot \omega_2$. 

\begin{lemma}\label{omega_d}  $|Vol_{\omega^{(d)}}(G_{\mathbb{R}}/G_{\mathbb{Z}})|=d^{\frac{n(n+3)}{4}}\sqrt{n+1}|Vol_{\tau_\infty}(G_{\mathbb{R}}/G_{\mathbb{Z}})|$ for $d\equiv 3 \pmod{4}$ and $|Vol_{\omega^{(d)}}(G_{\mathbb{R}}/G_{\mathbb{Z}})|=d^{\frac{n(n+3)}{4}}\cdot 2^{\frac{n(n+1)}{2}}\sqrt{n+1}|Vol_{\tau_\infty}(G_{\mathbb{R}}/G_{\mathbb{Z}})|$ for $d\equiv 1\pmod{4}$
\end{lemma}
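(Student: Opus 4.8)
The plan is to compare two Haar measures on $G_{\mathbb{R}}$ (equivalently on $G_{\mathbb{R}}/G_{\mathbb{Z}}$): the measure $\omega^{(d)}$ built from the trace form $B=\mathrm{Tr}(XY)$ on $\mathcal{L}$ via the Cartan decomposition $\mathcal{L}=k\oplus m$, and the Tamagawa measure $\tau_\infty$. Since any two invariant volume forms on the same Lie group differ by a scalar, the ratio $|Vol_{\omega^{(d)}}|/|Vol_{\tau_\infty}|$ is a constant depending only on how the two forms are normalized on the Lie algebra $\mathcal{L}$ (or rather on $\mathcal{L}_{\mathbb{R}}$). So the entire lemma reduces to a purely linear-algebra computation: express the covolume of the integral lattice $\mathcal{L}_{\mathbb{Z}}$ inside $\mathcal{L}_{\mathbb{R}}$ with respect to the volume form $\omega^{(d)}$ coming from $B$, while recalling that $\tau_\infty$ is by definition normalized so that this integral lattice has covolume $1$ with respect to the ``standard'' rational volume form on $\mathcal{L}$ (the one defined over $\mathbb{Q}$, independent of the choice of $\sqrt{-d}$). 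The discrepancy factor is exactly $|\det|^{1/2}$ of the Gram matrix of $B$ on a $\mathbb{Z}$-basis of $\mathcal{L}_{\mathbb{Z}}$, suitably interpreted, up to the contribution of how $\mathcal{O}_K$ sits inside $K_{\mathbb{R}}=\mathbb{C}$.

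First I would fix an explicit $\mathbb{Z}$-basis of $\mathcal{L}_{\mathbb{Z}}$, the integral points of the Lie algebra defined by $BL+L\bar B'=0$, $\mathrm{Tr}(B)=0$. Writing $B=B_1+\sqrt{-d}\,B_2$ with $B_1,B_2$ real, the anti-Hermitian-type condition splits into conditions on the real and imaginary parts; one gets a basis indexed by the off-diagonal and diagonal parameters, where the ``imaginary'' directions naturally carry a factor related to $\sqrt{-d}$. The case split $d\equiv 3\pmod 4$ versus $d\equiv 1\pmod 4$ is precisely the case split $\mathcal{O}_K=\mathbb{Z}[\sqrt{-d}]$ versus $\mathcal{O}_K=\mathbb{Z}[\frac{1+\sqrt{-d}}{2}]$, i.e. $|D|=d$ versus $|D|=4d$; passing from $\mathbb{Z}[\sqrt{-d}]$ to the full ring of integers rescales a number of basis vectors by $2$, which accounts for the extra $2^{n(n+1)/2}$ appearing in the second formula (the count $n(n+1)/2$ being the number of ``half-integral'' coordinate directions in $\mathcal{L}$, i.e. essentially the complex dimension count for the off-diagonal and trace-zero diagonal part).

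Next I would compute the Gram determinant of $B=\mathrm{Tr}(XY)$ on that basis, separately on $k$ and on $m$; multiplying gives the Gram determinant for $\omega^{(d)}=\omega_1\cdot\omega_2$. The trace form on $\mathfrak{su}(p,q)$ is a standard object and its determinant on the natural integral basis is an elementary product of small integers times the appropriate power of $d$; I expect the power of $d$ to come out to $\frac{n(n+3)}{2}$ inside the determinant (hence $d^{n(n+3)/4}$ after taking the square root), and the remaining integer part to contribute the $\sqrt{n+1}$ (coming from the $\mathrm{Tr}(B)=0$ / $\mathfrak{sl}$ versus $\mathfrak{gl}$ normalization — the Cartan subalgebra of the trace-zero part has discriminant $n+1$) together with powers of $2$ that either cancel against the $2$-adic normalization or survive as the $2^{n(n+1)/2}$ term in the $d\equiv 1\pmod 4$ case. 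Combining: $|Vol_{\omega^{(d)}}(G_{\mathbb{R}}/G_{\mathbb{Z}})| = (\text{Gram discriminant})^{1/2}\cdot |Vol_{\tau_\infty}(G_{\mathbb{R}}/G_{\mathbb{Z}})|$, which is the claimed identity.

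The main obstacle is bookkeeping rather than conceptual: one must be scrupulous about (i) exactly which rational volume form on $\mathcal{L}$ the Tamagawa measure $\tau_\infty$ is normalized against, so that the comparison constant is unambiguous, and (ii) the interaction between the Cartan decomposition $\mathcal{L}_{\mathbb{R}}=k\oplus m$ and the $\mathbb{Q}$-structure — the decomposition is not defined over $\mathbb{Q}$, so $\omega_1\cdot\omega_2$ must be related back to a single volume form on $\mathcal{L}_{\mathbb{R}}$ before comparing covolumes, and one should check that this introduces no extra sign or scaling. Keeping careful track of the factors of $2$ and $\sqrt{-d}$ through the change of basis from $\mathbb{Z}[\sqrt{-d}]$ to $\mathcal{O}_K$ is where errors are most likely, and it is exactly what separates the two cases in the statement.
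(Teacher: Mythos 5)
Your proposal follows essentially the same route as the paper: the paper fixes an explicit $\mathbb{Z}$-basis of $\mathcal{L}_{\mathbb{Z}}$ (with entries in terms of the generator $\epsilon$ of $\mathcal{O}_K$), normalizes $\tau_\infty$ to be the wedge of that basis, and computes the Gram determinant of $B=\mathrm{Tr}(XY)$, obtaining $\sqrt{|\det B|}=d^{n/2}(\epsilon-\bar\epsilon)^{n(n+1)/2}\sqrt{n+1}$, which yields exactly your predicted powers of $d$, the $\sqrt{n+1}$ from the Cartan part, and the $2^{n(n+1)/2}$ from the $n(n+1)/2$ complex coordinate directions. One small correction to your bookkeeping: for $d\equiv 1\pmod 4$ the full ring of integers already \emph{is} $\mathbb{Z}[\sqrt{-d}]$ (it is $d\equiv 3\pmod 4$ that gives $\mathcal{O}_K=\mathbb{Z}[\tfrac{1+\sqrt{-d}}{2}]$), so the factor $2^{n(n+1)/2}$ arises because $|\epsilon-\bar\epsilon|=2\sqrt{d}$ rather than $\sqrt{d}$ in that case, not from enlarging the ring --- but this does not change the final formulas.
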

\begin{proof} Let $\mathcal{O}_K=\mathbb{Z}\cdot 1 + \mathbb{Z}\cdot \epsilon$.
We fix the basis of $\mathcal{L}_\mathbb{Z}$ over $\mathbb{Z}$:
$e_k=\begin{pmatrix}
0& \ldots & 0 & \ldots& 0\\
\vdots& & \vdots &  & \vdots \\
0 & \ldots & 0&  \ldots & \epsilon\\
\vdots& &\vdots & &\vdots \\
0 & \ldots& \bar{\epsilon} & \ldots & 0
\end{pmatrix}$  (non-zero elements are in the $k$-th row of $n+1$-th column and $k$-th column of $n+1$-th row, $k\in\{1...n\}$);
$f_k=\begin{pmatrix}
0& \ldots & 0 & \ldots& 0\\
\vdots& & \vdots &  & \vdots \\
0 & \ldots & 0&  \ldots & 1\\
\vdots& &\vdots & &\vdots \\
0 & \ldots& 1 & \ldots & 0
\end{pmatrix}$  (non-zero elements are in the $k$-th row of $n+1$-th column and $k$-th column of $n+1$-th row, $k\in\{1...n\}$);
$g_k=\begin{pmatrix}
0& \ldots & 0 & 0\\
\vdots& & \vdots & \vdots  \\
0 & \ldots & \sqrt{-d} & 0\\
0 & \ldots& 0 &  -\sqrt{-d}
\end{pmatrix}$ (non-zero elements in $k$-th and $k+1$-th rows, $k\in\{1...n\}$); 
$e_{i,j}=\begin{pmatrix}
0& \ldots & 0 & &0&\ldots &0\\
\vdots& & \vdots & &\vdots & &\vdots  \\
0 & \ldots & 0 & \ldots & \epsilon & \ldots &0\\
\vdots& & \vdots & &\vdots & & \vdots \\
0 & \ldots& -\bar{\epsilon} & \ldots & 0&\ldots &0\\
\vdots& & \vdots & &\vdots & & \vdots \\
0& \ldots & 0 & & 0 & \ldots & 0
\end{pmatrix}$ (non-zero elements on the intersection of $i$-th row and $j$-th column and $j$-th row and $i$-th column, $i,j\in\{1...n\}, ~i\neq j$);
$f_{i,j}=\begin{pmatrix}
0& \ldots & 0 & &0&\ldots &0\\
\vdots& & \vdots & &\vdots & &\vdots  \\
0 & \ldots & 0 & \ldots & 1 & \ldots &0\\
\vdots& & \vdots & &\vdots & & \vdots \\
0 & \ldots& -1 & \ldots & 0&\ldots &0\\
\vdots& & \vdots & &\vdots & & \vdots \\
0& \ldots & 0 & & 0 & \ldots & 0
\end{pmatrix}$ (non-zero elements on the intersection of $i$-th row and $j$-th column and $j$-th row and $i$-th column, $i,j\in\{1...n\}, ~i\neq j$). In our basis $\tau_{\infty}=\bigwedge e_k \bigwedge f_k \bigwedge g_k \bigwedge e_{i,j}\bigwedge f_{i,j}$.
The matrix of the Killing form in the basis  $\{ g_1,\ldots, g_n,e_{1,2}, f_{1,2},\ldots e_{n-1,n},f_{n-1,n},e_1, f_1,\ldots e_n, f_n\}$ is:\\

\[
  B_L=\left(\begin{array}{@{}ccccc|cc|c|cc|c@{}}
    -2d & d & 0 & 0 &\ldots & 0 & 0&\ldots &0 &0  &\ldots\\
    d & -2d & d & 0 & \ldots& 0 & 0 &\ldots &0 &0 &\ldots\\
    0 & d& -2d& d&\ldots&  0& 0& \ldots& 0& 0&\ldots\\ 
    \vdots&  \vdots & \vdots & \vdots &\ddots   & \vdots& \vdots&\ddots & \vdots& \vdots& \ddots\\\hline
    0 & 0 & 0 & 0 & \ldots & -2\epsilon \bar{\epsilon} & -\epsilon-\bar{\epsilon} &\ldots &0 &0 &\ldots \\
    0 & 0 & 0 & 0 & \ldots & -\epsilon -\bar{\epsilon} & -2 & \ldots & 0& 0&\ldots\\ \hline
    \vdots & \vdots & \vdots & \vdots  & \ddots & \vdots & \vdots&\ddots & \vdots& \vdots& \ddots \\ \hline
    0 & 0 & 0 & 0 & \ldots & 0& 0& \ldots& 2\epsilon \bar{\epsilon} & \epsilon+\bar{\epsilon} & \ldots\\
    0 & 0 & 0 & 0 & \ldots & 0& 0& \ldots & \epsilon +\bar{\epsilon} & 2 & \ldots\\ \hline
    \vdots  & \vdots & \vdots & \vdots & \ddots & \vdots & \vdots& \ddots&\vdots &\vdots & \ddots  \\ \hline
  \end{array}\right)
\]
 So $\sqrt{|\det(B)|}=d^{\frac{n}{2}}(\epsilon-\bar{\epsilon})^{\frac{n(n+1)}{2}}\sqrt{n+1}$. Clearly, $\sqrt{|\det(B)|}=d^{\frac{n(n+3)}{4}}\sqrt{n+1}$ for $\epsilon=\frac{1+\sqrt{-d}}{2}$ and $\sqrt{|\det(B)|}=d^{\frac{n(n+3)}{4}}\cdot 2^{\frac{n(n+1)}{2}}\sqrt{n+1}$ for $\epsilon=\sqrt{-d}$. We get the statement of the lemma.
\end{proof}

Consider a closed Riemannian manifold $M^{2s}$ of dimension $2s$. Let $\Omega$ be the Euler-Poincare measure on it. The following theorem is well-known:

\begin{theorem}\cite{Che}
Let $\chi(M^{2s})$ be the Euler characteristic of a closed Riemannian manifold $M^{2s}$. Then 
$\int_{M^{2s}}\Omega=\chi(M^{2s})$. Moreover, if $M^{2s}$ is a homogeneous manifold of constant holomorphic curvarute $k$, then $\Omega=\dfrac{(s+1)! k^s}{(4\pi)^s}dv$, where $dv$ is a volume element corresponding to the Riemannian metric.
\end{theorem}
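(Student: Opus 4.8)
The plan is to treat the two assertions of the statement separately, the first by citation and the second by a universality-plus-normalization argument. The identity $\int_{M^{2s}}\Omega=\chi(M^{2s})$ is the generalized Gauss--Bonnet theorem: by definition the Euler--Poincar\'e measure $\Omega$ is the Chern--Weil representative of the Euler class $e(TM^{2s})$ built from the Levi--Civita connection, i.e.\ $(2\pi)^{-s}$ times the Pfaffian of the curvature $2$-form in a local oriented orthonormal frame, and the claim is $\langle e(TM^{2s}),[M^{2s}]\rangle=\chi(M^{2s})$. I would deduce this from the Poincar\'e--Hopf index theorem together with Chern's transgression formula (the Pfaffian form pulled back to the unit tangent sphere bundle is exact off the zero set of a chosen vector field), but since this is the one genuinely deep input and is exactly the content of \cite{Che}, I would simply invoke it.

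For the second assertion I would argue by universality and scaling. The Pfaffian formula shows that $\Omega=f\,dv$, where at each point $f$ is a fixed universal polynomial of degree $s$ in the components of the Riemann tensor. On a manifold of constant holomorphic sectional curvature $k$ the entire curvature tensor is pinned down by $k$, the metric $g$ and the complex structure $J$ through the classical complex-space-form identity
\[
R(X,Y)Z=\tfrac{k}{4}\bigl(g(Y,Z)X-g(X,Z)Y+g(JY,Z)JX-g(JX,Z)JY+2g(X,JY)JZ\bigr),
\]
so $f$ is a constant. Rescaling $g\mapsto\lambda g$ sends $k\mapsto k/\lambda$, $dv\mapsto\lambda^{s}\,dv$, and leaves $\int_{M^{2s}}\Omega=\chi(M^{2s})$ unchanged; since $f$ is a polynomial in $k$, the resulting relation $f(k/\lambda)=\lambda^{-s}f(k)$ forces $f=c(s)\,k^{s}$ with $c(s)$ a universal constant, hence $\Omega=c(s)\,k^{s}\,dv$.

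It then remains to evaluate $c(s)$ on one convenient model, for which I would take $M^{2s}=\mathbb{CP}^{s}$ with the Fubini--Study metric normalized to have holomorphic sectional curvature $k$. Here $\chi(\mathbb{CP}^{s})=s+1$ and $\mathrm{Vol}(\mathbb{CP}^{s})=(4\pi)^{s}/(s!\,k^{s})$ (the classical value, obtained from the area of a projective line in this normalization together with $\omega^{s}=s!\,dv$). Substituting into $c(s)\,k^{s}\,\mathrm{Vol}(\mathbb{CP}^{s})=\chi(\mathbb{CP}^{s})$ gives $c(s)=(s+1)!/(4\pi)^{s}$, which is the asserted formula $\Omega=\frac{(s+1)!\,k^{s}}{(4\pi)^{s}}dv$; as a sanity check, for $s=1$ this is the familiar Gauss--Bonnet integrand $\tfrac{K}{2\pi}\,dv$.

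The main obstacle is the first assertion, the generalized Gauss--Bonnet theorem, which I am content to cite. Within the rest of the argument the delicate points are the scaling bookkeeping that collapses $\Omega$ on a complex space form to a single universal constant, and the correct normalization relating the holomorphic sectional curvature to the K\"ahler form in the volume of $\mathbb{CP}^{s}$. If one prefers a purely local computation in place of the model space, the alternative is the direct Chern--Weil route: on a K\"ahler manifold $e(TM)=c_{s}(T^{1,0}M)$ and the Euler form equals the top Chern form $\det\!\bigl(\tfrac{i}{2\pi}\Theta\bigr)$ of the Chern connection on $T^{1,0}M$; feeding the explicit curvature matrix $\Theta$ of a complex space form into this determinant and using $\omega^{s}=s!\,dv$ produces the same constant, with the curvature-normalization conventions again the only point needing care.
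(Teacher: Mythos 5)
The paper does not prove this statement at all: it is quoted as a known result with a bare citation to Chern's 1945 paper, so there is no internal argument to compare yours against. Your reconstruction is correct and self-contained modulo the same citation. You invoke Chern (equivalently, the generalized Gauss--Bonnet theorem) only for $\int_{M^{2s}}\Omega=\chi(M^{2s})$, which is the one genuinely deep input, and then derive the curvature formula by a clean universality argument: the Euler form is $f\,dv$ with $f$ a universal degree-$s$ polynomial in the curvature components; on a complex space form the curvature tensor is the standard expression linear in $k$, so $f$ is a polynomial of degree at most $s$ in $k$; the scaling relation $f(k/\lambda)=\lambda^{-s}f(k)$ forces $f=c(s)k^{s}$; and the normalization on $\mathbb{CP}^{s}$ (with $\chi=s+1$ and $\mathrm{Vol}=(4\pi)^{s}/(s!\,k^{s})$, both of which you quote correctly) gives $c(s)=(s+1)!/(4\pi)^{s}$. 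Your $s=1$ sanity check $\Omega=\frac{k}{2\pi}dv$ is right. Two small points worth making explicit if you write this up: first, since $f$ is a single polynomial identity in $k$, determining it on the positive-curvature model $\mathbb{CP}^{s}$ also settles the negative-curvature case, which is the one actually used in the paper (the ball with $k=-2$ in Lemma \ref{curv}); second, the scaling step tacitly uses homogeneity (or just pointwise universality of $f$) to reduce $\int\Omega$ to $f(k)\cdot\mathrm{Vol}$, which you do state. Overall the proposal is a valid proof of the second assertion given the first, and is more informative than the paper's citation-only treatment.
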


\begin{lemma}\label{curv}
The curvature $k$ of space $G_{\mathbb{R}}/G_{\mathbb{Z}}$ in metrics $B|_m$ equals $-2$.
\end{lemma}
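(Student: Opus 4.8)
The plan is to compute the holomorphic sectional curvature of the complex ball $\mathbb{B}^n\cong G_{\mathbb{R}}/S(U(n)\times U(1))$ equipped with the $G_{\mathbb{R}}$-invariant metric determined by $B|_m$ under $m\cong T_o\mathbb{B}^n$: this metric descends to the quotient by the discrete group $G_{\mathbb{Z}}$, which is therefore locally isometric to $\mathbb{B}^n$ and has the same curvature. First I would fix the explicit model of the Cartan decomposition attached to $L=\mathrm{diag}(1,\dots,1,-1)$: the summand $m$ consists of the matrices $X_v=\begin{pmatrix}0&v\\ v^{*}&0\end{pmatrix}$, $v\in\mathbb{C}^n$, so $m\cong\mathbb{C}^n$; the invariant complex structure $J$, namely $\mathrm{ad}$ of the suitably normalized generator of the center of the compact summand of $\mathcal{L}$, acts by $J X_v=X_{iv}$; and a one-line trace computation gives $B(X_v,X_w)=\mathrm{Tr}(X_vX_w)=2\,\mathrm{Re}\langle v,w\rangle$. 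In particular $B(X_v,X_v)=2|v|^2$, $B(X_v,JX_v)=0$, and $B(JX_v,JX_v)=B(X_v,X_v)$, so for $v\neq 0$ the pair $\{X_v,JX_v\}$ spans a holomorphic $2$-plane whose Gram determinant equals $|X_v|^2|JX_v|^2-\langle X_v,JX_v\rangle^2=4|v|^4$.

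Next I would invoke the classical formula for the curvature tensor of a symmetric space of noncompact type at the base point, $R(X,Y)Z=-[[X,Y],Z]$ for $X,Y,Z\in m$, with the sign convention under which $k=\langle R(X,Y)Y,X\rangle\big/\bigl(|X|^2|Y|^2-\langle X,Y\rangle^2\bigr)$ is nonpositive (this is the normalization in which the Euler-Poincare formula of the cited theorem is stated). Everything then reduces to one bracket computation inside $\mathcal{L}$: one checks that $[X_v,JX_v]=\begin{pmatrix}-2ivv^{*}&0\\ 0&2i|v|^2\end{pmatrix}$ is block-diagonal, hence lies in the compact summand, and that $[[X_v,JX_v],JX_v]=4|v|^2X_v$, using $vv^{*}v=|v|^2v$. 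Hence $\langle R(X_v,JX_v)JX_v,X_v\rangle=-B\bigl(4|v|^2X_v,X_v\bigr)=-8|v|^4$, and dividing by $4|v|^4$ gives $k=-2$, independently of $v$. As $\mathbb{B}^n$ is a rank-one Hermitian symmetric space, this is its constant holomorphic sectional curvature, which is the claim.

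I do not expect a genuine obstacle here; the step most likely to introduce an error is the sign and normalization bookkeeping — keeping the convention for $R$ consistent with the definition of $k$ used in the previous theorem, and making sure the bracket is taken in $\mathcal{L}$ itself with no spurious scalar coming from the identification $m\cong T_o\mathbb{B}^n$. As a sanity check I would run the identical argument for $\mathbb{B}^1=\mathbb{H}^2$, that is, for $SL(2,\mathbb{R})$ with $\mathrm{Tr}(XY)$ restricted to the symmetric part of its Lie algebra: it yields Gauss curvature $-2$, which agrees with the value above and, via Gauss-Bonnet, with the Euler-Poincare formula in real dimension $2$. Since $\mathbb{B}^n$ is homogeneous it suffices to do the computation for a single convenient vector, say $v=e_1$, reducing every matrix involved to a $2\times 2$ block.
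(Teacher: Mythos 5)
Your proposal is correct and follows essentially the same route as the paper: the paper evaluates the holomorphic sectional curvature formula $k=B([[X,JX],X],JX)/\bigl(B(X,X)B(JX,JX)\bigr)$ on the basis vector $f_n$, which is exactly your computation with $v=e_n$ (your numerator $-B([[X,JX],JX],X)$ equals the paper's $B([[X,JX],X],JX)$ by invariance of the trace form). Your version is just more explicit about the bracket computation and the sign conventions, and both yield $k=-2$.
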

\begin{proof}
The curvature $k$ can be computed by the formula $$ k=\dfrac{B([[X,JX],X],JX)}{B(X,X)\cdot B(JX,JX)},$$ where $J$ is a complex structure on $\mathbb{B}^n$, $J\begin{pmatrix}
0& \ldots & 0 & a_1\\
\vdots&  & \vdots& \vdots \\
0 & \ldots  & 0 &a_n\\
\bar{a}_1 & \ldots& \bar{a}_n & 0
\end{pmatrix} = \begin{pmatrix}
0& \ldots & 0 & ia_1\\
\vdots&  & \vdots& \vdots \\
0 & \ldots  & 0 &ia_n\\
-i\bar{a}_1 & \ldots& -i\bar{a}_n & 0
\end{pmatrix}$. 
Substituting, for example, basis vector $f_n$ as $X$ in the above formula, we get the statement of the lemma.
\end{proof}

\begin{definition}
The quotient $Vol_{HM}(\mathbb{B}^n/\Gamma)=\dfrac{\chi(\mathbb{B}^n/\Gamma)}{\chi(\mathbb{P}^n(\mathbb{C}))}$ is called the Hirzebruch-Mumford volume of the quotient space $\mathbb{B}^n/\Gamma$ or the Hirzebruch-Mumford covolume of the group $\Gamma$.
\end{definition}

It means that $Vol_{HM}(\mathbb{B}^n/\Gamma)=\dfrac{Vol_{\Omega}(\mathbb{B}^n/\Gamma)}{n+1}$, since $\chi(\mathbb{P}^n(\mathbb{C})) = n+1$.  

So, $$Vol_{HM}(\mathbb{B}^n/\Gamma)=\dfrac{Vol_{\Omega}(\mathbb{B}^n/\Gamma)}{n+1}=Vol_{\omega_2}(\mathbb{B}^n/\Gamma)\cdot\dfrac{n!}{(2\pi)^n}.$$

Let $K$ be the maximal compact subgroup $S(U(n)\times U(1))$ of $G_\mathbb{R}$.
 
We note that $$Vol_{\omega}(G_{\mathbb{R}}/G_{\mathbb{Z}})=Vol_{\omega_1}(K)\cdot Vol_{\omega_2}(K\backslash G / G_{\mathbb{Z}})=$$ $$=Vol_{\omega_1}(K)\cdot Vol_{\omega_2}(\mathbb{B}^n / G_{\mathbb{Z}})=Vol_{\omega_1}(K)\cdot Vol_{HM}( \mathbb{B}^n/ \Gamma)\cdot \dfrac{(2\pi)^n}{n!}.$$

So, $$Vol_{HM}(\mathbb{B}^n/\Gamma)=\dfrac{n! \cdot Vol_{\omega}(G_{\mathbb{R}}/ G_{\mathbb{Z}})}{(2\pi)^n\cdot Vol_{\omega_1}(K)}.$$ 

Using lemma \ref{omega_d} we obtain:

\begin{lemma}\label{itog omega} $$Vol_{HM}(\mathbb{B}^n/\Gamma)=\dfrac{n! \cdot D^{\frac{n^2+3n}{4}}\sqrt{n+1} \cdot Vol_{\tau_\infty}(G_{\mathbb{R}}/G_{\mathbb{Z}})}{(2\pi)^n\cdot Vol_{\omega_1}(K)}, \textit{ for } d\equiv 3\pmod{4}$$
and $$Vol_{HM}(\mathbb{B}^n/\Gamma)=\dfrac{n! \cdot D^{\frac{n^2+3n}{4}}\sqrt{n+1} \cdot Vol_{\tau_\infty}(G_{\mathbb{R}}/G_{\mathbb{Z}})}{(4\pi)^n\cdot Vol_{\omega_1}(K)}, \textit{ for } d\equiv 1\pmod{4}.$$
\end{lemma}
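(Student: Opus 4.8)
The plan is to combine the identity
$$Vol_{HM}(\mathbb{B}^n/\Gamma)=\dfrac{n!\cdot Vol_{\omega}(G_{\mathbb{R}}/G_{\mathbb{Z}})}{(2\pi)^n\cdot Vol_{\omega_1}(K)}$$
derived just above (with $\omega=\omega^{(d)}=\omega_1\cdot\omega_2$) together with Lemma \ref{omega_d}, and then to rewrite the power of $d$ that appears as a power of the absolute discriminant $|D|$, keeping careful track of the powers of $2$ that the passage from $d$ to $D$ introduces.

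First I would substitute the two formulas of Lemma \ref{omega_d} for $Vol_{\omega^{(d)}}(G_{\mathbb{R}}/G_{\mathbb{Z}})$ into the displayed identity. In the case $d\equiv 3\pmod 4$ this gives immediately
$$Vol_{HM}(\mathbb{B}^n/\Gamma)=\dfrac{n!\cdot d^{\frac{n(n+3)}{4}}\sqrt{n+1}\cdot Vol_{\tau_\infty}(G_{\mathbb{R}}/G_{\mathbb{Z}})}{(2\pi)^n\cdot Vol_{\omega_1}(K)}.$$
Since $d$ is odd and squarefree with $d\equiv 3\pmod 4$, we have $-d\equiv 1\pmod 4$, so the discriminant of $K$ is $D=-d$, whence $|D|=d$ and $d^{\frac{n(n+3)}{4}}=|D|^{\frac{n^2+3n}{4}}$. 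This is exactly the first asserted formula.

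In the case $d\equiv 1\pmod 4$ the normalization of Lemma \ref{omega_d} (taken with $\epsilon=\sqrt{-d}$) carries an extra factor $2^{\frac{n(n+1)}{2}}$, while now $-d\equiv 3\pmod 4$, so $D=-4d$ and $d=|D|/4$. Substituting,
$$d^{\frac{n(n+3)}{4}}\cdot 2^{\frac{n(n+1)}{2}}=|D|^{\frac{n(n+3)}{4}}\cdot 2^{-\frac{n(n+3)}{2}+\frac{n(n+1)}{2}}=|D|^{\frac{n^2+3n}{4}}\cdot 2^{-n},$$
and pulling the factor $2^{-n}$ into the denominator turns $(2\pi)^n$ into $(2\cdot 2\pi)^n=(4\pi)^n$, which is the second asserted formula.

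The computation is entirely elementary; the one point that deserves attention is that in the $d\equiv 1\pmod 4$ case there are \emph{two} separate sources of powers of $2$ — the factor $2^{n(n+1)/2}$ coming from $\det B$ in Lemma \ref{omega_d}, and the factor $2^{-n(n+3)/2}$ hidden in $d^{n(n+3)/4}=(|D|/4)^{n(n+3)/4}$ — and one must verify that they collapse to exactly $2^{-n}$, so that the net effect on the final formula is precisely the replacement of $(2\pi)^n$ by $(4\pi)^n$.
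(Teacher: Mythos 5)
Your proof is correct and follows exactly the route the paper intends: the paper derives the lemma simply by substituting Lemma \ref{omega_d} into the identity $Vol_{HM}(\mathbb{B}^n/\Gamma)=\frac{n!\,Vol_{\omega}(G_{\mathbb{R}}/G_{\mathbb{Z}})}{(2\pi)^n Vol_{\omega_1}(K)}$ and converting $d$ to $|D|$, which is precisely what you do. Your bookkeeping of the two sources of powers of $2$ in the $d\equiv 1\pmod 4$ case (where $D=-4d$) is the one nontrivial step, and you carry it out correctly.
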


So it requires to compute $Vol_{\omega_1}(K)$ and $Vol_{\tau_\infty}(G_{\mathbb{R}}/G_{\mathbb{Z}})$.

\section*{Computation of $\tau_\infty$ for $L$ uning local measures $\tau_p$}

\begin{lemma}\label{index}
The index $[U(L,\mathcal{O}_K/ p^k \mathcal{O}_K):SU(L,\mathcal{O}_K/ p^k \mathcal{O}_k)]$ equals $2p^k$ if $p$ is ramified and $p^k(1-\left( \frac{D}{p}\right) p^{-1})$ if $p$ is unramified.
\end{lemma}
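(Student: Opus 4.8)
The plan is to compute the index $[U(L,\mathcal{O}_K/p^k\mathcal{O}_K):SU(L,\mathcal{O}_K/p^k\mathcal{O}_K)]$ via the determinant homomorphism. Recall that for $A \in U(L,\mathcal{O}_K/p^k\mathcal{O}_K)$ one has $A L \bar A' = L$, hence $\det(A)\overline{\det(A)} = 1$, so $\det$ maps the unitary group into the subgroup $N^1 = \{ x \in (\mathcal{O}_K/p^k\mathcal{O}_K)^\times : x\bar x = 1\}$ of norm-one elements. The special unitary group is precisely the kernel, so the index in question equals the size of the image of $\det$. First I would argue that $\det : U(L,\mathcal{O}_K/p^k\mathcal{O}_K) \to N^1$ is surjective: given $x$ with $x\bar x = 1$, the diagonal matrix $\mathrm{diag}(x,1,\ldots,1)$ (adjusting so it lies in $U(L)$, which it does since conjugation fixes the $1$'s on the diagonal of $L$ and $x\bar x=1$ handles the first entry) is a preimage. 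Therefore the index equals $\#N^1$, and the problem reduces to counting norm-one elements in $\mathcal{O}_K/p^k\mathcal{O}_K$.

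Next I would compute $\#N^1$ by splitting into cases according to the splitting behavior of $p$ in $K$. Let $R = \mathcal{O}_K/p^k\mathcal{O}_K$ and use the norm map $N\colon R^\times \to (\mathbb{Z}/p^k\mathbb{Z})^\times$, $x\mapsto x\bar x$; then $N^1 = \ker N$, so $\#N^1 = \#R^\times / \#\,\mathrm{im}(N)$. When $p$ is inert, $R = \mathbb{F}_{p^k}[\ldots]$ is the unramified extension ring and the norm $R^\times \to (\mathbb{Z}/p^k\mathbb{Z})^\times$ is surjective (this follows on the residue field from surjectivity of the norm for finite fields, and lifts by Hensel/successive approximation), so $\#N^1 = \#R^\times/\#(\mathbb{Z}/p^k\mathbb{Z})^\times = (p^{2k}-p^{2k-2})/(p^k-p^{k-1}) = p^{k-1}(p+1) = p^k(1+p^{-1})$. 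When $p$ splits, $R \cong (\mathbb{Z}/p^k\mathbb{Z}) \times (\mathbb{Z}/p^k\mathbb{Z})$ with conjugation swapping the factors, so $x\bar x = 1$ forces $x = (u, u^{-1})$ and $\#N^1 = \#(\mathbb{Z}/p^k\mathbb{Z})^\times = p^k(1-p^{-1})$. These two cases are exactly captured by the formula $p^k(1-\left(\frac{D}{p}\right)p^{-1})$ since $\left(\frac{D}{p}\right) = -1$ for inert and $+1$ for split $p$. When $p$ is ramified (i.e.\ $p \mid D$), $R$ is a local ring with maximal ideal generated by a uniformizer $\pi$ with $\pi\bar\pi = \pm p$ (a unit times $p$), and one computes $\#R^\times = p^{2k} - p^{2k-1}$ while the image of the norm has index $2$ in $(\mathbb{Z}/p^k\mathbb{Z})^\times$ for $p$ odd (the norm from a ramified quadratic extension hits exactly the squares-up-to-the-right-index; more precisely $\#\,\mathrm{im}(N) = \tfrac12\#(\mathbb{Z}/p^k\mathbb{Z})^\times$ since $d$ is odd and $p \mid d$), giving $\#N^1 = (p^{2k}-p^{2k-1})/(\tfrac12(p^k - p^{k-1})) = 2p^k$.

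I expect the main obstacle to be the ramified case: pinning down the exact image of the norm map $N\colon R^\times \to (\mathbb{Z}/p^k\mathbb{Z})^\times$ when $R = \mathcal{O}_K/p^k\mathcal{O}_K$ is not a domain but a local Artinian ring with nilpotents. One must be careful that the relevant local computation at the prime $p$ (for $p$ odd dividing the squarefree odd $d$) indeed yields image of index exactly $2$, independently of $k$, and that surjectivity of $\det$ onto $N^1$ still holds modulo $p^k$ rather than only over the field. The cleanest route is to work one power of $p$ at a time: establish the statement for $k=1$ using finite-field/finite-ring facts about norm maps in quadratic extensions, then lift to general $k$ by a filtration argument comparing $\ker(R^\times_{k} \to R^\times_{k-1})$ with $\ker((\mathbb{Z}/p^k)^\times \to (\mathbb{Z}/p^{k-1})^\times)$, where the norm on these successive graded pieces is, up to the unit $\pi\bar\pi/p$, just multiplication by $2$ (hence an isomorphism for $p$ odd), which forces $\#N^1$ to scale by exactly $p$ as $k$ increases and thus propagates the $k=1$ formula. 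The surjectivity of $\det$ reduces similarly to the $k=1$ case plus smoothness/Hensel, which is unproblematic since $\mathrm{diag}(x,1,\ldots,1)$ provides explicit lifts.
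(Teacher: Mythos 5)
Your proposal is correct and follows essentially the same route as the paper: the paper's proof is the exact sequence $1\to SU(L,\mathcal{O}_K/p^k\mathcal{O}_K)\to U(L,\mathcal{O}_K/p^k\mathcal{O}_K)\to U(1,\mathcal{O}_K/p^k\mathcal{O}_K)\to 1$ (i.e.\ the determinant map onto the norm-one group), after which it simply cites Otremba for $\#U(1,\mathcal{O}_K/p^k\mathcal{O}_K)$. You identify the same quotient, verify surjectivity with the explicit diagonal lift, and carry out the split/inert/ramified count yourself, arriving at the same values.
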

\begin{proof}
We consider the exact sequence $$1\to SU(L,\mathcal{O}_K/ p^k \mathcal{O}_K)\to U(L,\mathcal{O}_K/ p^k \mathcal{O}_K)\to U(1,\mathcal{O}_K/ p^k \mathcal{O}_K)\to 1.$$ Now it suffices to compute $\#U(1,\mathcal{O}_K/ p^k \mathcal{O}_K)$ which has been done, for example, in \cite{Otr}. 
\end{proof}

\begin{lemma}\label{general_p}
The local volume $\tau_p(G_{\mathbb{Z}_p})$ for $p\nmid D$ equals $$ \tau_p(G_{\mathbb{Z}_p})=\prod\limits_{i=2}^{n+1}\left( 1-\left( \frac{D}{p}\right)^{i}p^{-i} \right).$$
\end{lemma}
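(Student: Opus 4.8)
The plan is to reduce the computation of $\tau_p(G_{\mathbb{Z}_p})$ for $p \nmid D$ to a point count over a finite field, using the formula derived above, namely $Vol_{\tau_p}(G_{\mathbb{Z}_p}) = \# SU(L, \mathcal{O}_K/p\mathcal{O}_K) / p^{(n+1)^2 - 1}$ (here $p$ is odd since $p \nmid D$ and $D$ is even, so the $p=2$ case does not arise). First I would use Lemma \ref{index} in the unramified case to write $\# SU(L, \mathcal{O}_K/p\mathcal{O}_K) = \# U(L, \mathcal{O}_K/p\mathcal{O}_K) / \bigl(p(1 - \bigl(\tfrac{D}{p}\bigr)p^{-1})\bigr)$, so it suffices to count points on the unitary group over $\mathcal{O}_K/p\mathcal{O}_K$.

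The key case distinction is whether $p$ splits or is inert in $K$ (both are "unramified"; $\bigl(\tfrac{D}{p}\bigr) = +1$ or $-1$ respectively). If $p$ splits, $\mathcal{O}_K/p\mathcal{O}_K \cong \mathbb{F}_p \times \mathbb{F}_p$ with the nontrivial Galois action swapping factors, so $U(L, \mathcal{O}_K/p\mathcal{O}_K) \cong GL_{n+1}(\mathbb{F}_p)$, whose order is the classical $p^{\binom{n+1}{2}}\prod_{i=1}^{n+1}(p^i - 1)$. If $p$ is inert, $\mathcal{O}_K/p\mathcal{O}_K \cong \mathbb{F}_{p^2}$ and we get the genuine finite unitary group $U_{n+1}(\mathbb{F}_p)$, of order $p^{\binom{n+1}{2}}\prod_{i=1}^{n+1}(p^i - (-1)^i)$. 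Both cases are subsumed by writing the order as $p^{\binom{n+1}{2}}\prod_{i=1}^{n+1}(p^i - \bigl(\tfrac{D}{p}\bigr)^i)$. I would then substitute into the displayed formula: dividing by $p(1 - \bigl(\tfrac{D}{p}\bigr)p^{-1}) = (p - \bigl(\tfrac{D}{p}\bigr))$ kills the $i=1$ factor, and dividing by $p^{(n+1)^2 - 1}$ against $p^{\binom{n+1}{2}}\prod_{i=2}^{n+1} p^i = p^{\binom{n+1}{2} + (\binom{n+2}{2} - 1)} = p^{(n+1)^2 - 1}$ normalizes each remaining factor $p^i - \bigl(\tfrac{D}{p}\bigr)^i$ to $1 - \bigl(\tfrac{D}{p}\bigr)^i p^{-i}$, yielding exactly $\prod_{i=2}^{n+1}\bigl(1 - \bigl(\tfrac{D}{p}\bigr)^i p^{-i}\bigr)$.

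The main obstacle is justifying the reduction from $\mathcal{O}_K/p^k\mathcal{O}_K$ behaviour to the single residue field computation — but this is precisely handled by Corollary \ref{Hensel_cor} and the remark following it, which tells us $N_p = \# SU(L, \mathcal{O}/p\mathcal{O})$ for $p$ odd, so smoothness of the unitary group scheme over $\mathbb{Z}_p$ (for $p \nmid D$) is what makes the mod-$p$ count the right one; I would cite the preceding discussion for this. The only other point requiring a word of care is confirming that over the split ring $\mathbb{F}_p \times \mathbb{F}_p$ the condition $A L \bar{A}' = L$ forces the second coordinate of $A$ to be determined by the first (via $A_2 = (L (A_1')^{-1} L^{-1})$), giving the claimed isomorphism with $GL_{n+1}(\mathbb{F}_p)$; this is standard and I would state it briefly. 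An arithmetic bookkeeping check of the exponent identity $\binom{n+1}{2} + \sum_{i=2}^{n+1} i = (n+1)^2 - 1$ completes the argument.
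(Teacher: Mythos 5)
Your proposal is correct and takes essentially the same route as the paper: the paper packages the residue-field point count as the Gan--Yu local density $\beta_p$, but the actual computation is identical — $U(L,\mathcal{O}_K/p\mathcal{O}_K)$ is $GL_{n+1}(\mathbb{F}_p)$ in the split case and $U_{n+1}(\mathbb{F}_p)$ in the inert case, one divides by the index from Lemma \ref{index} and normalizes by $p^{(n+1)^2-1}$, and your exponent bookkeeping checks out. One small slip: $D=-d$ is odd when $d\equiv 3\pmod{4}$, so the case $p=2$ with $2\nmid D$ does occur (and Lemma \ref{p=2} explicitly invokes the present lemma for it), hence your reason for excluding $p=2$ is wrong — though the paper itself is equally casual about the $2$-adic normalization in that unramified case.
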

\begin{proof}
We compute $\beta_p$ using the article \cite{Gan}. By definition, $\beta_p=\lim\limits_{N\to\infty}p^{-N(n+1)^2}\#U(L,\mathcal{O}_K/p^N\mathcal{O}_K)$. This limit stabilizes for $N\geq 2\nu_p(\det(L))+1$, so in our case $N=1$. By lemma \ref{index} $[U(L,\mathcal{O}_K/ p \mathcal{O}_K):SU(L,\mathcal{O}_K/ p \mathcal{O}_K)]=p(1-\left( \frac{D}{p}\right) p^{-1})$, so $\#SU(L, \mathcal{O}_K/p\mathcal{O}_K)=\dfrac{\beta_p\cdot p^{(n+1)^2-1}}{1-\left( \frac{D}{p}\right) p^{-1}}$. So it suffices to compute $\beta_p$. The Jordan decomposition of the lattice $L$ contains only one component $L_0$, $d_0=0$ (using the notation from \cite{Gan}), so $\beta_p=p^{-\dim G_0}\#G_0(\mathbb{F}_p)$ ($\mathbb{F}_p$ -- finite field of order $p$). Here $G_0=U(n+1)$ if $\chi_D(p)=-1$ and $G_0=GL(n+1)$ if $\chi_D(p)=1$. So, $\dim G_0 = (n+1)^2$. It is well-known (\cite{Art}) that $\#GL(n+1,\mathbb{F}_p)=p^{(n+1)^2}\prod\limits_{i=1}^{n+1}(1-p^{-i})$ and
$\#U(n+1,\mathbb{F}_p)=p^{(n+1)^2}\prod\limits_{i=1}^{n+1}(1-(-1)^{i}p^{-i})$. 
So $$\tau_p(G_{\mathbb{Z}_p})=\dfrac{\#SU(L, \mathcal{O}_K/p\mathcal{O}_K)}{p^{(n+1)^2-1}}=\dfrac{\beta_p}{1-\left( \frac{D}{p}\right) p^{-1}}=\prod\limits_{i=2}^{n+1}\left( 1-\left( \frac{D}{p}\right)^{n+1}p^{-(n+1)} \right).$$
\end{proof}

\begin{lemma} \label{p div D}
The local volume $\tau_p(G_{\mathbb{Z}_p})$ for odd $p$ such that $p \mid D$ equals 
$$\tau_p(G_{\mathbb{Z}_p})=\left(1-\left( \frac{(-1)^{\frac{n+3}{2}}}{p}\right)p^{-\frac{n+1}{2}}\right)\prod\limits_{i=1}^{(n-1)/2}(1-p^{-2i}) \textit{ for odd n},$$ $$\tau_p(G_{\mathbb{Z}_p})=\prod\limits_{i=1}^{n/2}(1-p^{-2i}) \textit{ for even n}.$$ 
\end{lemma}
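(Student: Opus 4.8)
The plan is to follow the strategy of Lemma~\ref{general_p}: compute the local density $\beta_p$ via~\cite{Gan}, then convert it to $\tau_p(G_{\mathbb{Z}_p})$ using Lemma~\ref{index}. First I would fix the local picture at an odd $p\mid D$: since $d$ is odd and square-free with $p\mid d$, the completion $\mathcal{O}_K\otimes\mathbb{Z}_p$ is the ramified quadratic extension $\mathbb{Z}_p[\sqrt{-d}]$, with uniformiser $\pi=\sqrt{-d}$, residue field $\mathbb{F}_p$, and $p\,(\mathcal{O}_K\otimes\mathbb{Z}_p)=\pi^{2}(\mathcal{O}_K\otimes\mathbb{Z}_p)$; crucially, the nontrivial automorphism of $K/\mathbb{Q}$ reduces to the identity on $\mathbb{F}_p$. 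Exactly as in Lemma~\ref{general_p}, $L$ is unimodular at $p$, so the limit defining $\beta_p$ already stabilises at level $p$ and $\beta_p=p^{-\dim G_0}\#G_0(\mathbb{F}_p)$, where $G_0$ is the reductive group over $\mathbb{F}_p$ attached to the single (unimodular) Jordan component $L_0=L$. The essential new input is that over a \emph{ramified} extension $G_0$ is neither $GL$ nor the unramified unitary group: since the involution acts trivially on $\mathbb{F}_p$, the hermitian form reduces to the symmetric bilinear form $\bar L=\mathrm{diag}(1,\dots,1,-1)$, and $G_0=O(\bar L)$ is its orthogonal group, of dimension $\tfrac{n(n+1)}{2}$. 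Hence $\beta_p=p^{-n(n+1)/2}\,\#O(\bar L,\mathbb{F}_p)$.

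Next I would pass from $\beta_p$ to $\tau_p(G_{\mathbb{Z}_p})$ as in the unramified case. The greatest invariant factor $2$ of the defining system is a $p$-adic unit for odd $p$, so $N_p=\#SU(L,\mathcal{O}_K/p\mathcal{O}_K)$ and $\#U(L,\mathcal{O}_K/p\mathcal{O}_K)=\beta_p\,p^{(n+1)^2}$. Lemma~\ref{index} gives the ramified index $[U(L,\mathcal{O}_K/p\mathcal{O}_K):SU(L,\mathcal{O}_K/p\mathcal{O}_K)]=2p$, so $\#SU(L,\mathcal{O}_K/p\mathcal{O}_K)=\tfrac12\beta_p\,p^{(n+1)^2-1}$ and therefore $\tau_p(G_{\mathbb{Z}_p})=N_p/p^{(n+1)^2-1}=\beta_p/2=\#O(\bar L,\mathbb{F}_p)/\bigl(2\,p^{\,n(n+1)/2}\bigr)$.

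It then remains to insert the classical orders of finite orthogonal groups and do the bookkeeping. For even $n$ the form $\bar L$ has odd rank $n+1=2\ell+1$ with $\ell=n/2$, $\#O(2\ell+1,\mathbb{F}_p)=2\,p^{\ell^{2}}\prod_{i=1}^{\ell}(p^{2i}-1)$, and the powers of $p$ cancel the denominator, leaving $\prod_{i=1}^{n/2}(1-p^{-2i})$. For odd $n$ the form has even rank $n+1=2\ell$ with $\ell=(n+1)/2$ and a type $\epsilon\in\{\pm1\}$, $\#O^{\epsilon}(2\ell,\mathbb{F}_p)=2\,p^{\ell(\ell-1)}(p^{\ell}-\epsilon)\prod_{i=1}^{\ell-1}(p^{2i}-1)$; the same cancellation leaves $(1-\epsilon\,p^{-(n+1)/2})\prod_{i=1}^{(n-1)/2}(1-p^{-2i})$, and one identifies $\epsilon=\left(\frac{(-1)^{(n+3)/2}}{p}\right)$ by computing the discriminant $(-1)^{\ell}\det\bar L=(-1)^{\ell+1}$ of $\bar L$ and recalling that an even-dimensional space over $\mathbb{F}_p$ is split precisely when its discriminant is a square.

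The main obstacle is the structural claim of the first paragraph: identifying the residual reductive group at a ramified odd prime as exactly $O(\bar L)$, with the correct dimension, and reading off its type from the discriminant of the reduced form — this last point is exactly where the factor $\left(\frac{(-1)^{(n+3)/2}}{p}\right)$ comes from, and the parities of $n$ must be treated separately. An alternative avoiding the local-density formalism is to compute $\#U(L,\mathcal{O}_K/p\mathcal{O}_K)$ by hand: reduction modulo $\pi$ gives a surjection $U(L,\mathcal{O}_K/p\mathcal{O}_K)\twoheadrightarrow O(\bar L,\mathbb{F}_p)$ whose kernel consists of the matrices $I+\pi X$ with $XL$ symmetric, hence has order $p^{(n+1)(n+2)/2}$; this yields the same $\#U$, the work now being to establish surjectivity, i.e.\ smoothness of the unitary group scheme of the unimodular lattice $L$ at $p$. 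Everything after the first paragraph is the elementary cancellation indicated above.
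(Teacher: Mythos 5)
Your proposal is correct and takes essentially the same route as the paper: both compute $\beta_p$ via the Gan--Yu local density with $G_0$ the orthogonal group of the reduced form (of dimension $\tfrac{n(n+1)}{2}$), pass to $\tau_p(G_{\mathbb{Z}_p})=\beta_p/2$ using the index $2p$ from Lemma~\ref{index}, and read off $\epsilon=\left(\frac{(-1)^{(n+3)/2}}{p}\right)$ from the discriminant of $L$ in the classical formulas for $\#O(n+1,\mathbb{F}_p)$. The additional justification you supply for why the residual group at a ramified odd prime is the orthogonal group is a useful elaboration of a step the paper takes for granted, but it does not alter the argument.
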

\begin{proof}
We compute $\beta_p$ using the article \cite{Gan}. By definition, $\beta_p=\lim\limits_{N\to\infty}p^{-N(n+1)^2}\#U(L,\mathcal{O}_K/p^N\mathcal{O}_K)$. This limit stabilizes for $N\geq 2\nu_p(\det(L))+1$, so in our case $N=1$. By lemma \ref{index} $[U(L,\mathcal{O}_K/ p \mathcal{O}_K):SU(L,\mathcal{O}_K/ p \mathcal{O}_K)]=2p$, so $\#SU(L, \mathcal{O}_K/p\mathcal{O}_K)=\dfrac{\beta_p\cdot p^{(n+1)^2}}{2p}$. So it suffices to compute $\beta_p$. The Jordan decomposition of the lattice $L$ contains only one component $L_0$, $d_0=0$ (using the notation from \cite{Gan}), so $\beta_p=p^{-\dim G_0}\#G_0(\mathbb{F}_p)$ ($\mathbb{F}_p$ -- finite field of order $p$). Here $G_0=O(n+1)$, so $\dim G_0=\frac{n(n+1)}{2}$. It is well-known \cite{Art} that $\#O(n+1,\mathbb{F}_p)=2p^{\frac{n(n+1)}{2}}\prod\limits_{i=1}^{\frac{n}{2}}(1-p^{-2i})$ if $n$ is even and
$\#O(n+1,\mathbb{F}_p)=2p^{\frac{n(n+1)}{2}}(1-\epsilon p^{-\frac{n+1}{2}})\prod\limits_{i=1}^{\frac{n-1}{2}}(1-p^{-2i})$ if $n$ is odd. Here $\epsilon =1$ if the discriminant of $L$ is $(-1)^{\frac{n+1}{2}}$ and $\epsilon=-1$ otherwise, so $\epsilon=\left( \frac{(-1)^{\frac{n+3}{2}}}{p}\right)$. 
We get $$\tau_p(G_{\mathbb{Z}_p})=\dfrac{\#SU(L, \mathcal{O}_K/p\mathcal{O}_K)}{p^{(n+1)^2-1}}=\dfrac{\beta_p}{2}=\left(1-\left( \frac{(-1)^{\frac{n+3}{2}}}{p}\right)p^{-\frac{n+1}{2}}\right)\prod\limits_{n=1}^{(n-1)/2}(1-p^{-2i})$$ for odd $n$ and $$\tau_p(G_{\mathbb{Z}_p})=\prod\limits_{n=1}^{n/2}(1-p^{-2i})$$ for even $n$.
\end{proof}

\begin{lemma}\label{p=2}
The local volume $\tau_2(G_{\mathbb{Z}_2})$ equals $\tau_2(G_{\mathbb{Z}_2})=\prod\limits_{i=2}^{n+1}(1-\left( \frac{-d}{2}\right)^i 2^{-i})$ if $2\nmid D$, $\tau_2(G_{\mathbb{Z}_2})=2^{-n}\prod\limits_{i=1}^{[n/2]}(1-2^{-2i})$ if $2\mid D$.
\end{lemma}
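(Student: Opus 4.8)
The plan is to argue exactly as in Lemmas \ref{general_p} and \ref{p div D} --- computing the local density $\beta_2$ of \cite{Gan}, reading off the index of $SU$ in $U$ from Lemma \ref{index}, and assembling $\tau_2(G_{\mathbb{Z}_2})$ --- but now with the $p=2$ version of the normalisation recorded in the section on local measures, namely $\tau_2(G_{\mathbb{Z}_2})=\#SU(L,\mathcal{O}_K/2^3\mathcal{O}_K)\big/\big(4^{(n+1)^2-1}\cdot\#\ker\phi_1\big)$ with $\phi_1$ the reduction map of Corollary \ref{Hensel_cor}. It is natural to split into the case $2\nmid D$ (equivalently $d\equiv 3\pmod 4$, where $2$ is unramified in $K$) and the case $2\mid D$ (equivalently $d\equiv 1\pmod 4$, where $2$ ramifies), since the hermitian lattice $L\otimes\mathbb{Z}_2$ is unimodular over the étale algebra $\mathcal{O}_K\otimes\mathbb{Z}_2$ in the first case and over a ramified dyadic extension in the second, and the two local pictures are genuinely different.

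For $2\nmid D$ the argument of Lemma \ref{general_p} goes through with $p=2$. Here $\sqrt{-d}$ is a $2$-adic unit, so the invariant factors of the system defining $SU$ are units, $SU(L)$ is smooth over $\mathbb{Z}_2$, and the correction factor $\#\ker\phi_1$ cancels against the extra power of $2$ in the denominator, leaving $\tau_2(G_{\mathbb{Z}_2})=\#SU(L,\mathcal{O}_K/2\mathcal{O}_K)/2^{(n+1)^2-1}$. The Jordan decomposition of $L\otimes\mathbb{Z}_2$ has a single unimodular component, so by \cite{Gan} $\beta_2=2^{-(n+1)^2}\#G_0(\mathbb{F}_2)$ with $G_0=GL(n+1)$ when $2$ splits and $G_0=U(n+1)$ when $2$ is inert; feeding in $\#GL(n+1,\mathbb{F}_2)$, $\#U(n+1,\mathbb{F}_2)$ and the unramified index $2^k\big(1-(\tfrac{-d}{2})2^{-1}\big)$ from Lemma \ref{index} (in which $(\tfrac{D}{2})=(\tfrac{-d}{2})=\pm1$) gives $\tau_2(G_{\mathbb{Z}_2})=\beta_2\big/\big(1-(\tfrac{-d}{2})2^{-1}\big)=\prod_{i=2}^{n+1}\big(1-(\tfrac{-d}{2})^i2^{-i}\big)$.

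For $2\mid D$, which is the substantive case, I would again compute $\beta_2$ by \cite{Gan}, but now the relevant input is the reductive quotient $G_0$ of the special fibre of the canonical smooth $\mathbb{Z}_2$-model of the unitary group of the ramified-dyadic unimodular lattice $L\otimes\mathbb{Z}_2$. Because in characteristic $2$ the residue involution becomes trivial, $\mathcal{O}_K/2\mathcal{O}_K$ is the non-reduced ring $\mathbb{F}_2[\varepsilon]$, and $L$ reduces to the identity form, this quotient is a form of the odd-dimensional special orthogonal group over $\mathbb{F}_2$, which coincides with $Sp(2[n/2],\mathbb{F}_2)$; this is precisely what produces the symmetric product $\prod_{i=1}^{[n/2]}(1-2^{-2i})$ and kills the $\epsilon$-factor (present for odd $n$) that appeared at odd ramified primes in Lemma \ref{p div D}. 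Combining $\beta_2$ with the ramified index $2\cdot 2^k$ of Lemma \ref{index} and with $\#\ker\phi_1$ --- here $\phi_1$ is a genuine reduction between the level-$8$ and level-$4$ solution sets, and a short computation (matrices $I+4h$ with $h$ symmetric modulo $2$ and of trace $0$ over $\mathbb{F}_2[\varepsilon]$) evaluates $\#\ker\phi_1$ --- yields $\tau_2(G_{\mathbb{Z}_2})=2^{-n}\prod_{i=1}^{[n/2]}(1-2^{-2i})$. Alternatively one may avoid \cite{Gan} altogether and count $SU(L,\mathcal{O}_K/2^N\mathcal{O}_K)$ directly for $N\le 3$, which is feasible thanks to the collapse of the involution modulo $2$.

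The main obstacle is this ramified-dyadic computation: extracting from \cite{Gan} the correct reductive quotient and the exact stabilisation level of $\beta_2$ (dyadic hermitian local densities being notoriously delicate, and a priori depending on which ramified quadratic extension of $\mathbb{Q}_2$ occurs --- for $d\equiv1\pmod 4$ one meets both $\mathbb{Q}_2(\sqrt{-1})$ and $\mathbb{Q}_2(\sqrt{-5})$ --- although the final formula turns out to be insensitive to this), together with pinning down $\#\ker\phi_1$ exactly. I would cross-check the outcome against Lemma \ref{p div D} (the same $\prod(1-p^{-2i})$ shape, the new features at $p=2$ being the overall factor $2^{-n}$ and the trivial $\epsilon$-factor) and, globally, by substituting $\tau_2$ together with Lemmas \ref{general_p} and \ref{p div D} into Weil's relation $\tau_\infty(G_{\mathbb{R}}/G_{\mathbb{Z}})=\big(\prod_{p\neq\infty}\tau_p(G_{\mathbb{Z}_p})\big)^{-1}$ and verifying, via Lemma \ref{itog omega}, consistency with Theorem \ref{L}.
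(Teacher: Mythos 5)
Your plan follows the paper's proof essentially step for step: the unramified case is delegated to Lemma \ref{general_p}, and in the ramified case one computes $\beta_2$ via the local-density machinery (the reductive quotient being the symplectic group $Sp(2[n/2],\mathbb{F}_2)$, exactly as you predict, which yields $\beta_2=2\prod_{i=1}^{[n/2]}(1-2^{-2i})$), combines it with the ramified index $2\cdot 2^3$ from Lemma \ref{index}, and divides by $\#\ker\phi_1=2^{n^2+3n}$, computed by counting matrices $E+4B$ exactly as you describe. The one correction: the ramified dyadic density is taken from Cho \cite{Cho} rather than Gan--Yu \cite{Gan} (whose results do not cover ramified hermitian lattices in residue characteristic $2$) --- this is precisely the obstacle you flag, and with that substitution your outline matches the paper's argument.
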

\begin{proof}
The computations in case $2\nmid D$ have already been done in lemma \ref{general_p}, so we need only to prove the lemma for $2\mid D$. We compute $\beta_2$ using the article \cite{Cho}. By definition, $\beta_2=\lim\limits_{N\to\infty}p^{-N(n+1)^2}\#U(L,\mathcal{O}_K/2^N\mathcal{O}_K)$. This limit stabilizes for $N\geq 2\nu_p(\det(L))+3$ (because $p=2$ is ramified in our case), so $N=3$. By lemma \ref{index} the index $[U(L,\mathcal{O}_K/ 2^3 \mathcal{O}_K):SU(L,\mathcal{O}_K/ 2^3 \mathcal{O}_K)]=2^4$, so $\#SU(L, \mathcal{O}_K/2^3\mathcal{O}_K)=\beta_2\cdot p^{3(n+1)^2-4}$. So it suffices to compute $\beta_2$. The Jordan decomposition of the lattice $L$ contains only one component $L_0$ of type $I^0$ when $n$ is even and of type $I^e$ when $n$ is odd. Moreover, $d_0=0$, $N=0$, $\beta=1$, $f=2$ (using the notation from \cite{Cho}). So $\beta_2=2^{-(n+1)^2}\cdot 2^{m+1} \cdot \#Sp(n,2)$ for even $n$ and $\beta_2=2^{-(n+1)^2}\cdot 2^{m+1} \cdot \#Sp(n-1,2)$ for odd $n$. Considering that $m=(n+1)^2-\frac{n(n+1)}{2}$ for even $n$ and $m=(n+1)^2-\frac{n(n-1)}{2}$ for odd $n$ and that $\# Sp(n,2)=2^{\frac{n(n+1)}{2}}\prod\limits_{i=1}^{[n/2]}(1-2^{-2i})$, we get $\beta_2=2\prod\limits_{i=1}^{[n/2]}(1-2^{-2i})$. Now we need to compute $|ker (\phi)|$, where $\phi:SU(L, \mathcal{O}_K/2^3\mathcal{O}_K)\to SU(L, \mathcal{O}_K/2^2\mathcal{O}_K)$. If $A\in \ker(\phi)$ then $A=E+4B$, where $B$ is a matrix with coefficients in $\mathcal{O}/2\mathcal{O}$. The matrix $B$ must satisfy the system: $\begin{cases} -BL=L\bar{B}', \\ Tr(B)=0.\end{cases}$ If $B=(b_{ij})$, we get the following equations: $\begin{cases} -b_{ii}=\bar{b}_{ii}, ~i\in\{1;n\} \\  -b_{ij}=\bar{b}_{ji}, ~i,j\in\{1;n\}, i\neq j\\
b_{i,n+1}=\bar{b}_{n+1,i},~i\in\{1;n\}.
\end{cases}$
 Each equation for $b_{ii}$ has $4$ solutions, and in the remaining equations one summand is uniquely determined by the other one, so it gives $4$ options for each pair $(b_{ij}, ~b_{ji})$. So $|ker(\phi)|=4^n\cdot 4^{\frac{(n+1)^2-(n+1)}{2}}=2^{n^2+3n}$.

We get $$\tau_2(G_{\mathbb{Z}_2})=\dfrac{\#SU(L, \mathcal{O}_K/2^3\mathcal{O}_K)}{4^{(n+1)^2-1}\cdot |ker(\phi)|}=\dfrac{2^{3(n+1)^2-3}\prod\limits_{i=1}^{[n/2]}(1-2^{-2i})}{2^{2n^2+4n}\cdot 2^{n(n+3)}}=2^{-n}\prod\limits_{i=1}^{[n/2]}(1-2^{-2i}).$$
\end{proof}

The values of $\tau_{\infty}(G_{\mathbb{R}}/G_{\mathbb{Z}})$ are written in the table below. To compute them we used the results of lemma \ref{general_p}, lemma \ref{p div D} and lemma \ref{p=2} and the fact that  $\tau_{\infty}(G_{\mathbb{R}}/G_{\mathbb{Z}})=\dfrac{1}{\prod\limits_{p\neq \infty}\tau_p(G_{\mathbb{Z}_p})}$.

\begin{table}[h]
\centering
	\begin{tabular}{|c|c|c|}
		\hline
		{\bf n} & {\bf D} & {\bf $\tau_{\infty}(G_{\mathbb{R}}/G_{\mathbb{Z}})$} \\
	 	\hline
	 	{\bf even} &  {\bf -4d } & {\bf $2^n \zeta(2)\cdot L(3)\cdot \zeta(4)\cdot L(5)\cdot \ldots \cdot L(n+1)$}   \\
	 	\hline
	 	{\bf even} &  {\bf -d } & {\bf $\zeta(2)\cdot L(3)\cdot \zeta(4)\cdot L(5)\cdot \ldots \cdot L(n+1)$}   \\
	 	\hline
	 	{\bf odd} &  {\bf -4d} & {\bf $2^n(1-2^{-(n+1)})\cdot \prod\limits_{p|d} (1+\left( \frac{(-1)^{(n+3)/2}}{p}\right)\cdot p^{-\frac{n+1}{2}}) \cdot \zeta(2)\cdot  \ldots \cdot \zeta(n+1)$}   \\
	 	\hline
	 	{\bf odd} &  {\bf -d} & {\bf $\prod\limits_{p|d} (1+\left( \frac{(-1)^{(n+3)/2}}{p}\right)p^{-\frac{n+1}{2}})  \zeta(2)\cdot L(3)\cdot  \ldots \cdot \zeta(n+1)$}      \\
	 	\hline
	\end{tabular}
\end{table}

\section*{The volume $Vol_{\omega_1}(K)$ of the maximal compact group $K=S(U(n)\times U(1))$}
We will need the following result.
\begin{lemma} $Vol_{\omega_1}(SU(n))=\dfrac{\sqrt{n}(2\pi)^{\frac{n^2+n-2}{2}}}{\prod\limits_{i=1}^{n-1}i!}$.
\end{lemma}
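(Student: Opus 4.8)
The plan is to compute the Riemannian volume of $SU(n)$ with respect to the metric induced by the trace form $B(X,Y) = \mathrm{Tr}(XY)$ on its Lie algebra $\mathfrak{su}(n)$, and to do this by exploiting the fibration $SU(n) \to SU(n)/SU(n-1) \cong S^{2n-1}$ together with an inductive argument. First I would fix an orthogonal basis of $\mathfrak{su}(n)$ adapted to the form $B$: the off-diagonal skew-Hermitian generators (real and imaginary parts of $E_{ij}$, $i<j$), and a basis of the Cartan subalgebra of traceless diagonal imaginary matrices. The form $B$ restricted to $\mathfrak{su}(n)$ is negative definite, so one works with $-B$; the key bookkeeping is to track the normalization constants $B(X,X)$ on each basis vector (these are small powers of $2$) and, crucially, the covolume of the weight lattice inside the Cartan subalgebra, which is where the factor $\sqrt n$ will come from — it is essentially $\sqrt{\det}$ of the Cartan matrix of type $A_{n-1}$, and $\det$ of the $A_{n-1}$ Cartan matrix is $n$.

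Next I would set up the induction. The subgroup $SU(n-1) \subset SU(n)$ sits in the upper-left block, and $SU(n)/SU(n-1) \cong S^{2n-1}$; the metric $-B$ on $\mathfrak{su}(n)$ restricts on the orthogonal complement of $\mathfrak{su}(n-1)$ to a multiple of the round metric on $S^{2n-1}$. Computing that multiple (again a power of $2$ times a constant) and using the known volume of the round sphere $\mathrm{Vol}(S^{2n-1}) = \frac{2\pi^n}{(n-1)!}$ (suitably rescaled to the metric we get), one obtains a recursion $\mathrm{Vol}_{\omega_1}(SU(n)) = c_n \cdot \mathrm{Vol}_{\omega_1}(SU(n-1))$ for an explicit constant $c_n$. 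Telescoping from the base case $SU(1) = \{1\}$ (volume $1$) or $SU(2)$, and collecting the powers of $2\pi$ and the factorials, should yield $\mathrm{Vol}_{\omega_1}(SU(n)) = \dfrac{\sqrt n \,(2\pi)^{(n^2+n-2)/2}}{\prod_{i=1}^{n-1} i!}$. One sanity check: the exponent $\tfrac{n^2+n-2}{2} = \binom{n+1}{2} - 1 = \dim SU(n)/2 \cdot \tfrac{?}{}$ — in fact $\dim SU(n) = n^2 - 1$, and $(n^2-1) - (n-1) = n^2 - n$, consistent with $2\pi$ appearing once per pair of dual coordinates on the $n^2 - n$ root directions plus half-integer contributions from the rank $n-1$ torus; this matches $(n^2+n-2)/2 = \tfrac{(n^2-n)}{2} + (n-1)$.

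The main obstacle I expect is getting the constant $\sqrt n$ exactly right, i.e. correctly computing the volume of the maximal torus $T \subset SU(n)$ in the metric $-B$: one must identify the lattice $\ker(\exp|_{\mathfrak t})$ and compute its covolume against the Gram matrix of $-B$ restricted to $\mathfrak t$. Using the standard coordinates $\mathrm{diag}(2\pi i t_1, \dots, 2\pi i t_n)$ with $\sum t_j = 0$ and $t_j \in \mathbb{Z}$ on the lattice, the Gram matrix of $-\mathrm{Tr}(XY)$ is $(2\pi)^2$ times the Gram matrix of the root lattice $A_{n-1}$, whose determinant is $n$; taking the square root gives $(2\pi)^{n-1}\sqrt n$, which is precisely the torus contribution, and the rest is routine. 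An alternative, cleaner route avoiding the sphere recursion is to quote Macdonald's formula for the Riemannian volume of a compact semisimple Lie group in terms of the order of the Weyl group, the exponents, and the covolume of the coroot lattice; applied to $SU(n)$ with the trace form it gives the stated answer directly, and I would present that as the streamlined proof, relegating the inductive sphere-fibration computation to a remark.
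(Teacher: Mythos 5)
Your proposal is correct in substance, and the ``streamlined'' route you settle on at the end --- quoting Macdonald's volume formula --- is exactly what the paper does: it cites Macdonald's value of $\mathrm{Vol}(SU(n))$ for the metric $\mathrm{tr}(\mathrm{ad}_X\,\mathrm{ad}_Y)$ and then rescales by $(2n)^{\frac{n^2-1}{2}}$ using the identity $\mathrm{tr}(\mathrm{ad}_X\,\mathrm{ad}_Y)=2n\,\mathrm{tr}(XY)$ on $\mathfrak{su}(n)$, which is a two-line proof once the reference is granted. Your primary route, the induction along $SU(n-1)\to SU(n)\to S^{2n-1}$, is genuinely different and more self-contained (no appeal to Macdonald, no Weyl-group or coroot-lattice bookkeeping), and it does reproduce the stated formula: the recursion $V_n=V_{n-1}\cdot 2^{n-1}\sqrt{n/(n-1)}\cdot\frac{2\pi^n}{(n-1)!}$ telescopes to $\sqrt{n}\,(2\pi)^{(n^2+n-2)/2}/\prod_{i=1}^{n-1}i!$. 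One step of your sketch is imprecise, however: the normal homogeneous metric that $-\mathrm{Tr}(XY)$ induces on $SU(n)/SU(n-1)\cong S^{2n-1}$ is \emph{not} a multiple of the round metric for $n\ge 2$, because the isotropy representation of $SU(n-1)$ on the orthogonal complement splits as $\mathbb{C}^{n-1}\oplus\mathbb{R}$ and the Hopf-fibre direction carries a different scale (a Berger sphere: the off-diagonal directions have squared length $2|v|^2$ while the diagonal direction contributes $\tfrac{n}{n-1}s^2$). The volume recursion still closes, but you must integrate the Jacobian of the quotient map, i.e.\ multiply the round volume $\frac{2\pi^n}{(n-1)!}$ by $\sqrt{2^{2(n-1)}\cdot n/(n-1)}$, rather than by a single power of a common radius. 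With that correction your inductive argument is a valid elementary alternative to the paper's citation-plus-rescaling proof; what the paper's approach buys is brevity, and what yours buys is independence from the external reference.
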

\begin{proof}
It is well-known (\cite{Mac}) that if we take the measure to be $tr(ad_X ad_Y)$, then the volume of the group $SU(n)$ equals $\dfrac{2^{\frac{n^2-1}{2}}n^{\frac{n^2}{2}}(2\pi)^{\frac{n^2+n-2}{2}}}{1!2!3!\ldots (n-1)!}$. In the Lie algebra $su(n)$ the form $tr(ad_X ad_Y)$ corresponds to the form $2n \cdot tr(XY)=2n\cdot B(X,Y)$, so the proportionality coefficient is $(2n)^{\frac{n^2-1}{2}}$. So,
 $$Vol_{\omega_1}(SU(n))=\dfrac{2^{\frac{n^2-1}{2}}n^{\frac{n^2}{2}}(2\pi)^{\frac{n^2+n-2}{2}}}{1!2!3!\ldots (n-1)!(2n)^{\frac{n^2-1}{2}}}=\dfrac{\sqrt{n}(2\pi)^{\frac{n^2+n-2}{2}}}{\prod\limits_{i=1}^{n-1}i!}.$$
\end{proof}

\begin{lemma} $Vol_{\omega_1}(S(U(n)\times U(1)))=\dfrac{\sqrt{n+1}(2\pi)^{\frac{n^2+n}{2}}}{\prod\limits_{i=1}^{n-1}i!}$.
\end{lemma}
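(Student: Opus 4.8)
The plan is to realize $K=S(U(n)\times U(1))$ as an $n$-fold quotient of $SU(n)\times(\text{circle})$ and reduce everything to the previous lemma. First I would describe the Lie algebra $k$ of $K$ inside $\mathcal{L}$: it consists of the block matrices $\mathrm{diag}(X,it)$ with $X\in u(n)$, $t\in\mathbb{R}$, and $\mathrm{tr}(X)+it=0$. Any skew-Hermitian $X$ splits as $X=X_0+\frac{i\alpha}{n}I_n$ with $X_0\in su(n)$ and $i\alpha=\mathrm{tr}(X)$, which exhibits $k=su(n)\oplus\mathbb{R}v$, where $v=\mathrm{diag}(i,\ldots,i,-in)$ is an $(n+1)\times(n+1)$ matrix and $su(n)$ sits inside $k$ via $X_0\mapsto\mathrm{diag}(X_0,0)$. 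A one-line trace computation gives $B(\mathrm{diag}(X_0,0),v)=i\,\mathrm{tr}(X_0)=0$, so this decomposition is $B$-orthogonal, and $B(v,v)=\mathrm{Tr}(v^2)=-n-n^2=-n(n+1)$. Moreover the restriction of $B$ to $su(n)$ is exactly the form used in the previous lemma, since the block embedding preserves traces.

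Next I would work at the group level. Put $T=\{\mathrm{diag}(e^{is}I_n,e^{-ins}):s\in\mathbb{R}\}\subset K$, the one-parameter subgroup $\exp(\mathbb{R}v)$; it is a circle of least period $2\pi$, since $\mathrm{diag}(e^{is}I_n,e^{-ins})$ is the identity exactly when $e^{is}=1$. The product map $SU(n)\times T\to K$, $(g,t)\mapsto gt$, is a surjective homomorphism ($SU(n)$ and $T$ commute as block matrices): given $(A,\zeta)\in K$ with $\det A=\zeta^{-1}$, pick $s$ with $e^{ins}=\zeta^{-1}$, so that $g:=e^{-is}A\in SU(n)$ and $g\cdot\mathrm{diag}(e^{is}I_n,e^{-ins})=(A,\zeta)$. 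Its kernel is $SU(n)\cap T=\{e^{2\pi ik/n}I_n:0\le k<n\}$, the center $\mu_n$ of $SU(n)$ embedded antidiagonally, so the map is an $n$-sheeted covering; and since its differential at the identity is the $B$-orthogonal isomorphism $su(n)\oplus\mathbb{R}v\to k$, it pulls $\omega_1$ back to the product measure. Hence $\mathrm{Vol}_{\omega_1}(SU(n))\cdot\mathrm{Vol}_{\omega_1}(T)=n\cdot\mathrm{Vol}_{\omega_1}(K)$.

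Finally, $\mathrm{Vol}_{\omega_1}(T)$ is the length $2\pi$ of the parameter interval times the $B$-length $\sqrt{|B(v,v)|}=\sqrt{n(n+1)}$ of the generator $v$, so $\mathrm{Vol}_{\omega_1}(T)=2\pi\sqrt{n(n+1)}$. Substituting this together with $\mathrm{Vol}_{\omega_1}(SU(n))=\sqrt{n}(2\pi)^{(n^2+n-2)/2}/\prod_{i=1}^{n-1}i!$ from the previous lemma and simplifying (using $\sqrt{n}\cdot\sqrt{n(n+1)}/n=\sqrt{n+1}$ and $(n^2+n-2)/2+1=(n^2+n)/2$) yields the claimed value. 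The one point that needs care is the combinatorics of the central overlap: checking that $SU(n)\cap T$ has order exactly $n$, and that $SU(n)$ and $T$ span $k$ with $B$-orthogonal, commuting factors — this is precisely what promotes the naive product $SU(n)\times T$ to $K$ with the correct normalization, whereas the trace identities are routine.
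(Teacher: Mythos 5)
Your argument is correct and is essentially the paper's own proof: both decompose the Lie algebra of $K$ as $su(n)\oplus\mathbb{R}v$ with $v=\mathrm{diag}(i,\ldots,i,-in)$, observe that $SU(n)\times T\to K$ is an $n$-sheeted cover, and multiply $Vol_{\omega_1}(SU(n))$ by the circle length $2\pi\sqrt{n^2+n}$ and divide by $n$. You simply supply more of the routine verifications (orthogonality, surjectivity, and the order of $SU(n)\cap T$) that the paper leaves implicit.
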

\begin{proof} 
Consider the vector $e_1=\begin{pmatrix}
i& 0&\ldots & 0& 0\\
0 & i & \ldots & 0& 0\\
\vdots & & & & \vdots\\
0&0& \ldots &i &0\\
0&0&\ldots &0&-ni 
\end{pmatrix}$ as the first basis vector in the Lie algebra of the group $S(U(1)\times U(n))$. Then the Lie algebra is a direct sum of $\mathbb{C}e_1$ and $su(n)$. The direct product of $SU(n)$ on a one-parameter subgroup, generated by $e_1$ (a circle), is an $n$-sheet cover of $S(U(n)\times U(1))$. The radius of the circle, corresponding to $e_1$, is $\sqrt{(e_1,e_1)}=\sqrt{n^2+n}$. So we get:  
$$Vol_{\omega_1}(S(U(n)\times U(1)))=\dfrac{2\pi \sqrt{n^2+n} \cdot Vol_{\omega_1}(SU(n))}{n}=\dfrac{\sqrt{n+1}(2\pi)^{\frac{n^2+n}{2}}}{\prod\limits_{i=1}^{n-1}i!}.$$
\end{proof}

Now if we substitute $Vol_{\omega_1}(S(U(n)\times U(1)))$ and $\tau_{\infty}(G_{\mathbb{R}}/G_{\mathbb{Z}})$ into the formulas in lemma \ref{itog omega} we get the statement of the theorem \ref{L}.

\section*{The computation of $Vol_{HM}(\mathbb{B}^n/\Gamma')$}
Consider the group $G'=SU(M,K)$. More explicitly, $G'$ is the group of matrices $A$ such that $\begin{cases} AM\bar{A}'=L, \\ \det(A)=1.\end{cases}$ The Lie algebra $\mathcal{L}'$ of the group $G'$ is defined by the system $\begin{cases} BM+M\bar{B}'=0,\\ Tr(B)=0.\end{cases}$ We need to compute new local densities $\tau_2(G'_{\mathbb{Z}_2})$, $\tau_p(G'_{\mathbb{Z}_p})$ for odd $p$ such that $p|D$ and the new determinant of the Killing form. Everything else remains the same as for the lattice $L$ and group $G$.

\begin{lemma}
$|Vol_{\omega^{(d)}}(G'_{\mathbb{R}}/G'_{\mathbb{Z}})|=2^n\cdot d^{\frac{n(n+3)}{4}}\sqrt{n+1}|Vol_{\tau_\infty}(G'_{\mathbb{R}}/G'_{\mathbb{Z}})|$ for $d\equiv 3 \pmod{4}$ and $|Vol_{\omega^{(d)}}(G'_{\mathbb{R}}/G'_{\mathbb{Z}})|=d^{\frac{n(n+3)}{4}}\cdot 2^{\frac{n(n+3)}{2}}\sqrt{n+1}|Vol_{\tau_\infty}(G'_{\mathbb{R}}/G'_{\mathbb{Z}})|$ for $d\equiv 1\pmod{4}$
\end{lemma}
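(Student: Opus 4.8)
The plan is to mirror the proof of Lemma~\ref{omega_d}: fix a $\mathbb{Z}$-basis of $\mathcal{L}'_{\mathbb{Z}}$, write the Gram matrix of the form $B=Tr(XY)$ in that basis, and compare its determinant with the one obtained for $L$. Writing $\mathcal{O}_K=\mathbb{Z}\cdot 1+\mathbb{Z}\cdot\epsilon$, the defining equations $BM+M\bar B'=0$, $Tr(B)=0$ differ from those for $L$ only in the entries involving the last row and column: for $i,j\le n$ the condition is still $b_{ij}+\bar b_{ji}=0$ and $b_{n+1,n+1}$ is still purely imaginary, but now $b_{n+1,i}=2\bar b_{i,n+1}$ for $i\le n$. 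Hence the basis vectors $g_k$, $e_{i,j}$, $f_{i,j}$ stay the same, while $e_k,f_k$ get replaced by $e'_k$ (value $\epsilon$ in the $k$-th row of the $(n+1)$-th column and value $2\bar\epsilon$ in the $k$-th column of the $(n+1)$-th row) and $f'_k$ (values $1$ and $2$ in those same positions). One checks that $\{g_k,e_{i,j},f_{i,j},e'_k,f'_k\}$ is indeed a $\mathbb{Z}$-basis of $\mathcal{L}'_{\mathbb{Z}}$: in the $(k,n+1)$-plane the integral sublattice is exactly the matrices with $b_{k,n+1}\in\mathcal{O}_K$ (the requirement $b_{n+1,k}=2\bar b_{k,n+1}\in\mathcal{O}_K$ being automatic), so $f'_k,e'_k$ generate it, while the diagonal and the upper $n\times n$ block are treated exactly as for $L$. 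In particular $\tau_\infty=\bigwedge g_k\wedge e_{i,j}\wedge f_{i,j}\wedge e'_k\wedge f'_k$.

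Next I compute the Gram matrix $B_M$ of $B$ in this basis. Since a product of a diagonal matrix with an off-diagonal one is traceless, the $g$-block remains orthogonal to everything else, and $B_M$ is block diagonal with exactly the same $n\times n$ tridiagonal $g$-block and the same $2\times2$ blocks $\left(\begin{smallmatrix}-2\epsilon\bar\epsilon&-\epsilon-\bar\epsilon\\-\epsilon-\bar\epsilon&-2\end{smallmatrix}\right)$ for each pair $(e_{i,j},f_{i,j})$ as in Lemma~\ref{omega_d}; only the $2\times2$ block for each pair $(e'_k,f'_k)$ changes. Computing $Tr((e'_k)^2)$, $Tr(e'_kf'_k)$, $Tr((f'_k)^2)$ gives this block as $\left(\begin{smallmatrix}4\epsilon\bar\epsilon&2(\epsilon+\bar\epsilon)\\2(\epsilon+\bar\epsilon)&4\end{smallmatrix}\right)$, with determinant $16\epsilon\bar\epsilon-4(\epsilon+\bar\epsilon)^2=-4(\epsilon-\bar\epsilon)^2$, i.e.\ exactly four times the determinant $-(\epsilon-\bar\epsilon)^2$ of the corresponding block for $L$. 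There are $n$ such blocks, whence $|\det B_M|=4^n\,|\det B_L|$ and therefore $\sqrt{|\det B_M|}=2^n\sqrt{|\det B_L|}$.

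Finally, as in Lemma~\ref{omega_d}, $|Vol_{\omega^{(d)}}(G'_{\mathbb{R}}/G'_{\mathbb{Z}})|=\sqrt{|\det B_M|}\cdot|Vol_{\tau_\infty}(G'_{\mathbb{R}}/G'_{\mathbb{Z}})|$. Substituting $\sqrt{|\det B_L|}=d^{\frac{n(n+3)}{4}}\sqrt{n+1}$ for $\epsilon=\frac{1+\sqrt{-d}}{2}$ and $\sqrt{|\det B_L|}=d^{\frac{n(n+3)}{4}}2^{\frac{n(n+1)}{2}}\sqrt{n+1}$ for $\epsilon=\sqrt{-d}$ yields the factors $2^n d^{\frac{n(n+3)}{4}}\sqrt{n+1}$ and $2^{n+\frac{n(n+1)}{2}}d^{\frac{n(n+3)}{4}}\sqrt{n+1}=2^{\frac{n(n+3)}{2}}d^{\frac{n(n+3)}{4}}\sqrt{n+1}$, which is the claim. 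The only point requiring real care is the verification that the displayed family really generates $\mathcal{L}'_{\mathbb{Z}}$ over $\mathbb{Z}$ — that the seemingly half-integral constraint $b_{k,n+1}=\tfrac12\bar b_{n+1,k}$ does not enlarge the lattice — together with bookkeeping on the sign of the bottom entry of $M$, which is harmless because $\epsilon-\bar\epsilon$ is purely imaginary, so $(\epsilon-\bar\epsilon)^2<0$ and all the relevant determinants come out positive.
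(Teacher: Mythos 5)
Your proposal is correct and follows essentially the same route as the paper: the same modified basis $e'_k,f'_k$ (with entries $\epsilon,2\bar\epsilon$ and $1,2$), the same block-diagonal Gram matrix with the new blocks $\left(\begin{smallmatrix}4\epsilon\bar\epsilon&2(\epsilon+\bar\epsilon)\\2(\epsilon+\bar\epsilon)&4\end{smallmatrix}\right)$, and the resulting factor $\sqrt{|\det B_M|}=2^n\sqrt{|\det B_L|}$. Your explicit verification that $\{g_k,e_{i,j},f_{i,j},e'_k,f'_k\}$ is genuinely a $\mathbb{Z}$-basis of $\mathcal{L}'_{\mathbb{Z}}$ (via $b_{n+1,k}=2\bar b_{k,n+1}$ being automatically integral) is a detail the paper leaves implicit, and is a welcome addition.
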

\begin{proof} We fix the basis of $\mathcal{L}'_{\mathbb{Z}}$ over $\mathbb{Z}$. It consists of the elements $g_k$, $e_{i,j}$, $f_{i,j}$ (the same as in $\mathcal{L}_{\mathbb{Z}}$) and elements $e'_{k}=\begin{pmatrix}
0& \ldots & 0 & \ldots& 0\\
\vdots& & \vdots &  & \vdots \\
0 & \ldots & 0&  \ldots & \epsilon \\
\vdots& &\vdots & &\vdots \\
0 & \ldots& 2\bar{\epsilon} & \ldots & 0
\end{pmatrix}$, (instead of $e_k$,non-zero elements are in the $k$-th row of $n+1$-th column and $k$-th column of $n+1$-th row, $k\in\{1...n\}$); 
$f'_{k}=\begin{pmatrix}
0& \ldots & 0 & \ldots& 0\\
\vdots& & \vdots &  & \vdots \\
0 & \ldots & 0&  \ldots & 1\\
\vdots& &\vdots & &\vdots \\
0 & \ldots& 2 & \ldots & 0
\end{pmatrix}$, (instead of $f_k$, non-zero elements are in the $k$-th row of $n+1$-th column and $k$-th column of $n+1$-th row, $k\in\{1...n\}$).

In our basis $\tau_{\infty}=\bigwedge e'_k \bigwedge f'_k \bigwedge g_k \bigwedge e_{i,j}\bigwedge f_{i,j}$.
The matrix of the Killing form in the basis  $\{ g_1,\ldots, g_n,e_{1,2}, f_{1,2},\ldots e_{n-1,n},f_{n-1,n},e'_1, f'_1,\ldots e'_n, f'_n\}$ is:\\

\[
  B_M=\left(\begin{array}{@{}ccccc|cc|c|cc|c@{}}
    -2d & d & 0 & 0 &\ldots & 0 & 0&\ldots &0 &0  &\ldots\\
    d & -2d & d & 0 & \ldots& 0 & 0 &\ldots &0 &0 &\ldots\\
    0 & d& -2d& d&\ldots&  0& 0& \ldots& 0& 0&\ldots\\ 
    \vdots&  \vdots & \vdots & \vdots &\ddots   & \vdots& \vdots&\ddots & \vdots& \vdots& \ddots\\\hline
    0 & 0 & 0 & 0 & \ldots & -2\epsilon \bar{\epsilon} & -\epsilon-\bar{\epsilon} &\ldots &0 &0 &\ldots \\
    0 & 0 & 0 & 0 & \ldots & -\epsilon -\bar{\epsilon} & -2 & \ldots & 0& 0&\ldots\\ \hline
    \vdots & \vdots & \vdots & \vdots  & \ddots & \vdots & \vdots&\ddots & \vdots& \vdots& \ddots \\ \hline
    0 & 0 & 0 & 0 & \ldots & 0& 0& \ldots& 4\epsilon \bar{\epsilon} & 2(\epsilon+\bar{\epsilon}) & \ldots\\
    0 & 0 & 0 & 0 & \ldots & 0& 0& \ldots & 2(\epsilon +\bar{\epsilon}) & 4 & \ldots\\ \hline
    \vdots  & \vdots & \vdots & \vdots & \ddots & \vdots & \vdots& \ddots&\vdots &\vdots & \ddots  \\ \hline
  \end{array}\right)
\]
 So $\sqrt{|\det(B)|}=2^n\cdot d^{\frac{n}{2}}(\epsilon-\bar{\epsilon})^{\frac{n(n+1)}{2}}\sqrt{n+1}$. Clearly, $\sqrt{|\det(B)|}=2^n\cdot d^{\frac{n(n+3)}{4}}\sqrt{n+1}$ for $\epsilon=\frac{1+\sqrt{-d}}{2}$ and $\sqrt{|\det(B)|}=d^{\frac{n(n+3)}{4}}\cdot 2^{\frac{n(n+3)}{2}}\sqrt{n+1}$ for $\epsilon=\sqrt{-d}$. We get the statement of the lemma.
\end{proof}

\begin{lemma}\label{index M}
$[U(M,\mathcal{O}/2^k\mathcal{O}):SU(M,\mathcal{O}/2^k\mathcal{O})]=2|U(1,\mathcal{O}/2^k\mathcal{O})|=\begin{cases} 2^{k+1}(1-\left( \frac{D}{2}\right)2^{-1}), \textit{ 2 is not ramified} \\ 2^{k+2}, \textit{ 2 is ramified}
\end{cases}$ for sufficiently large $k$.
\end{lemma}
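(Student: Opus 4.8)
The plan is to repeat the short exact sequence argument of Lemma \ref{index}, this time keeping track of the fact that $\det M=-2$ is \emph{not} a unit modulo $2^k$. I would start from
$$1\to SU(M,\mathcal{O}/2^k\mathcal{O})\to U(M,\mathcal{O}/2^k\mathcal{O})\xrightarrow{\ \det\ }R\to 1,$$
where $R$ denotes the image of the determinant in $(\mathcal{O}/2^k\mathcal{O})^{*}$, so that the index in question equals $|R|$ and the problem reduces to identifying $R$. Taking determinants and conjugating the relation $AM\bar A'=M$ over $\mathcal{O}/2^k\mathcal{O}$ gives $N(\det A)\cdot(-2)\equiv -2\pmod{2^k}$, i.e.\ $2(N(\det A)-1)\equiv 0\pmod{2^k}$, hence $N(\det A)\equiv 1\pmod{2^{k-1}}$. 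Therefore $R\subseteq R_0:=\{x\in(\mathcal{O}/2^k\mathcal{O})^{*}:N(x)\equiv 1\pmod{2^{k-1}}\}$, which is a group strictly containing $U(1,\mathcal{O}/2^k\mathcal{O})=\{x:N(x)\equiv 1\pmod{2^k}\}$; by contrast, for the unimodular lattice $L$ one had $R=U(1,\mathcal{O}/2^k\mathcal{O})$ exactly, which is the whole of the extra factor $2$.

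For the reverse inclusion I would produce diagonal preimages. Given $x\in R_0$ (automatically a unit, since then $N(x)$ is odd), the diagonal matrix $\mathrm{diag}(1,\dots,1,x)$ satisfies $\mathrm{diag}(1,\dots,1,x)\,M\,\overline{\mathrm{diag}(1,\dots,1,x)}'=\mathrm{diag}(1,\dots,1,-2N(x))\equiv M\pmod{2^k}$, so it lies in $U(M,\mathcal{O}/2^k\mathcal{O})$ and has determinant $x$; hence $R=R_0$. It remains to count $R_0$. The norm induces an exact sequence $1\to U(1,\mathcal{O}/2^k\mathcal{O})\to R_0\xrightarrow{N}\{1,1+2^{k-1}\}$, whose image is a subgroup of the order-two group $\{1,1+2^{k-1}\}\subset(\mathbb{Z}/2^k\mathbb{Z})^{*}$, so $|R_0|=2\,|U(1,\mathcal{O}/2^k\mathcal{O})|$ exactly when $1+2^{k-1}$ is a norm from $(\mathcal{O}/2^k\mathcal{O})^{*}$. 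I would check this in the three cases: if $2$ splits, $(1+2^{k-1},1)$ works under $\mathcal{O}/2^k\mathcal{O}\cong(\mathbb{Z}/2^k\mathbb{Z})^{2}$; if $2$ is inert, the norm $(\mathcal{O}/2^k\mathcal{O})^{*}\to(\mathbb{Z}/2^k\mathbb{Z})^{*}$ is already surjective (unramified local norm); if $2$ is ramified, then $N\bigl(1+2^{k-2}(\sqrt{-d}+1)\bigr)=1+2^{k-1}+2^{2k-4}(1+d)\equiv 1+2^{k-1}\pmod{2^k}$ for $k\ge 3$, using $\mathrm{Tr}(\sqrt{-d}+1)=2$ and $N(\sqrt{-d}+1)=1+d$ with $v_2(1+d)=1$. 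Combining with the value of $|U(1,\mathcal{O}/2^k\mathcal{O})|$ already used in Lemma \ref{index} (and \cite{Otr}) gives the two displayed formulas, with the ``$2$ not ramified'' case corresponding to $2\nmid D$ and the ``$2$ ramified'' case to $2\mid D$.

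The main obstacle is the norm-surjectivity step, i.e.\ verifying that $1+2^{k-1}$ is a norm: this is immediate for $2$ split or inert but in the ramified case depends on the explicit element above, and is precisely why the statement is asserted only ``for sufficiently large $k$'' (concretely $k\ge 3$, the same bound needed in Lemma \ref{p=2}). The inclusions $R\subseteq R_0$ and $R\supseteq R_0$, the exactness of both sequences, and the reduction of the count to that of $|U(1,\mathcal{O}/2^k\mathcal{O})|$ are all formal. I would also note that this extra factor $2$, relative to Lemma \ref{index}, is traceable solely to the non-unit $\det M=-2$, and it is this factor — together with the different Jordan-type input for $\beta_2$ coming from $M\otimes\mathbb{Z}_2\cong\langle 1\rangle^{n}\perp\langle -2\rangle$ — that feeds into the subsequent computation of $\tau_2(G'_{\mathbb{Z}_2})$.
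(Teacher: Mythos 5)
Your proposal is correct and follows essentially the same route as the paper: the determinant exact sequence, the observation that $AM\bar A'=M$ only forces $N(\det A)\equiv 1\pmod{2^{k-1}}$ because $\det M=-2$ is not a unit, and the resulting extra factor of $2$ over $|U(1,\mathcal{O}/2^k\mathcal{O})|$. The only difference is that the paper realizes the nontrivial norm class by the single uniform witness $\mathrm{diag}(1,\dots,1,1+2^{k-2})$, whose determinant has norm $(1+2^{k-2})^2\equiv 1+2^{k-1}\pmod{2^k}$ for large $k$, avoiding your case-by-case norm-surjectivity check.
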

\begin{proof}
It is well-known \cite{Otr}, that $|U(1,\mathcal{O}/2^k\mathcal{O})|=\begin{cases} 2^k(1-\left( \frac{D}{2}\right)2^{-1}), \textit{ 2 is not ramified} \\ 2^{k+1}, \textit{ 2 is ramified}
\end{cases}$
 Consider the exact sequence 
$$ 1\to SU(M,\mathcal{O}/2^k\mathcal{O})\to U(M,\mathcal{O}/2^k\mathcal{O})\to \mathcal{U}\to 1,$$
where $\mathcal{U}$ is the set of elements $x$, such that $x$ is the norm of the determinant of matrix in $U(M,\mathcal{O}/2^k\mathcal{O})$. Each element $X\in U(M,\mathcal{O}/2^k\mathcal{O})$ satisfies the equation $XM\bar{X}'=M$, which means that $2Norm(\det(X))=2$. It means that $Norm(\det(X))=1$ or $Norm(\det(X))=1+2^{k-1}$. We note that there exists a matrix $X\in U(M,\mathcal{O}/2^k\mathcal{O})$ such that $Norm(\det(X))=1+2^{k-1}$. For example, we can take $X=diag(1,1,\ldots,1,1+2^{k-2})$. Clearly, $|\mathcal{U}|=2|U(1,\mathcal{O}/2^k\mathcal{O})|$.

\end{proof}

\begin{lemma}
The values of $\tau_2(G'_{\mathbb{Z}_2})$ are listed in the table below:
\begin{table}[h]
\centering
	\begin{tabular}{|c|c|c|}
		\hline
		{\bf n} & {\bf D} & {\bf $\tau_{2}(G'_{\mathbb{Z}_2})$}  \\
	 	\hline
	 	{\bf even} &  {\bf -4d } & { $\frac{1}{2^n}\prod\limits_{i=1}^{\frac{n-2}{2}}(1-2^{-2i})=\tau_{2}(G_{\mathbb{Z}_2})\cdot(1-2^{-n})^{-1}$}    \\
	 	\hline
	 	{\bf even} &  {\bf -d} & { $\prod\limits_{i=2}^{n}(1-\left( \frac{-d}{2}\right)^i 2^{-i} )(1-\left(\frac{-d}{2} \right)2^{-1})=\tau_{2}(G_{\mathbb{Z}_2})\cdot \frac{1-\left(\frac{-d}{2} \right)2^{-1}}{1-\left(\frac{-d}{2} \right)^{n+1}2^{-(n+1)}}$}   \\
	 	\hline
	 	{\bf odd} &  {\bf -4d} & {$\frac{1}{2^n}\prod\limits_{i=1}^{\frac{n-1}{2}}(1-2^{-2i})=\tau_{2}(G_{\mathbb{Z}_2})$ }    \\
	 	\hline
	 	{\bf odd} &  {\bf -d} & {$\prod\limits_{i=2}^{n}(1-\left( \frac{-d}{2}\right)^i 2^{-i} )(1-\left(\frac{-d}{2} \right)2^{-1})=\tau_{2}(G_{\mathbb{Z}_2})\cdot \frac{1-\left(\frac{-d}{2} \right)2^{-1}}{1-\left(\frac{-d}{2} \right)^{n+1}2^{-(n+1)}}$ }   \\
	 	\hline
	\end{tabular}
\end{table}
\end{lemma}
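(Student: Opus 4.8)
The plan is to repeat, for the lattice $M$, the scheme of Lemmas~\ref{general_p}, \ref{p div D} and \ref{p=2}. First I would check (exactly as was done for $L$ after Corollary~\ref{Hensel_cor}) that for the defining system $BM+M\bar B'=0$, $\operatorname{Tr}B=0$ the invariant factors are again bounded by $2$, so that Proposition~\ref{tau_p from balls} applies verbatim and gives
$\tau_2(G'_{\mathbb{Z}_2})=\dfrac{\#\,SU(M,\mathcal{O}/2^3\mathcal{O})}{4^{(n+1)^2-1}\cdot|\ker\phi_1|}$, where $\phi_1\colon SU(M,\mathcal{O}/2^3\mathcal{O})\to SU(M,\mathcal{O}/2^2\mathcal{O})$. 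Thus the work splits into (i) computing the order $\#\,SU(M,\mathcal{O}/2^3\mathcal{O})$ through the local density $\beta_2$ of the unitary group $U(M)$ together with the index from Lemma~\ref{index M}, and (ii) computing $|\ker\phi_1|$. The four rows of the table correspond to the two parities of $n$ and to whether $2$ is unramified ($D=-d$, i.e. $d\equiv 3\pmod 4$) or ramified ($D=-4d$, i.e. $d\equiv 1\pmod 4$), and I would treat these cases in parallel.

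For (i): the $\mathcal{O}_K\otimes\mathbb{Z}_2$-Jordan decomposition of $M$ is $M=M_0\perp M_1$ with $M_0=\langle 1\rangle^{\oplus n}$ the unimodular block — of the same type as the block $L_0$ appearing in Lemma~\ref{p=2} (type $I^0$ for even $n$, type $I^e$ for odd $n$ in the notation of \cite{Cho}) — and $M_1=\langle -2\rangle$ a rank-one block of scale $(2)$ (of scale $\mathfrak p$ when $2$ is ramified). I would then feed this decomposition into the local density formula of \cite{Gan} when $2$ is unramified and of \cite{Cho} when $2$ is ramified. As in Lemma~\ref{p=2} this writes $\beta_2$ as $2^{-(n+1)^2}$ times the product of the orders of the reductive parts $G_0(\mathbb{F}_2)$ attached to the Jordan blocks — a $\#GL$- or $\#U$-type factor together with a rank-one factor $(1-(\tfrac{-d}{2})2^{-1})^{\pm1}$ in the unramified case, and a $\#Sp(n,2)$ or $\#Sp(n-1,2)$ factor in the ramified case — times the ``extra'' power of $2$ coming from the quantities $2^{m+1}$ and the cross-term $2^{N}$ between the two distinct Jordan components. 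Dividing by the index $[U(M,\mathcal{O}/2^3\mathcal{O}):SU(M,\mathcal{O}/2^3\mathcal{O})]$ supplied by Lemma~\ref{index M} gives $\#\,SU(M,\mathcal{O}/2^3\mathcal{O})$.

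For (ii): as in Lemma~\ref{p=2}, an element of $\ker\phi_1$ is $A=E+4B$ with $B$ a matrix over $\mathcal{O}/2\mathcal{O}$ satisfying $BM+M\bar B'\equiv0\pmod 2$ and $\operatorname{Tr}B=0$; the only new feature is that $M\equiv\operatorname{diag}(1,\dots,1,0)\pmod 2$, so the equations coming from the last row and column degenerate. Concretely this forces $b_{n+1,i}\equiv 0$ for $i\le n$, leaves the entries $b_{i,n+1}$ ($i\le n$) and $b_{n+1,n+1}$ free, and keeps the hermitian-symmetry relations $b_{ij}+\bar b_{ji}=0$ among the first $n$ coordinates (the number of solutions of $b_{ii}+\bar b_{ii}=0$ in $\mathcal{O}/2\mathcal{O}$ depends on the splitting of $2$), subject to $\sum b_{ii}=0$. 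Counting the solutions in each of the four cases and substituting everything into $\tau_2(G'_{\mathbb{Z}_2})=\#\,SU(M,\mathcal{O}/2^3\mathcal{O})/\bigl(4^{(n+1)^2-1}|\ker\phi_1|\bigr)$ yields the first expression in each row; dividing by the value $\tau_2(G_{\mathbb{Z}_2})$ of Lemma~\ref{p=2} then produces the stated ratios.

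The main obstacle I expect is the bookkeeping of the $2$-power exponents in the ramified local density formula of \cite{Cho} when the Jordan decomposition has two components of different scales — correctly identifying the types ($I^0$, $I^e$, or $II$), the invariant $f$, and the cross-term $N$ — and, running in parallel, pinning down $|\ker\phi_1|$ exactly given the mod-$2$ degeneration of $M$; an off-by-a-power-of-$2$ in either place propagates to the final answer. As a consistency check one should verify that the formulas specialize so that for $n$ odd and $D=-4d$ one recovers $\tau_2(G_{\mathbb{Z}_2})$ unchanged, while in the other three cases the correction factor is the single ``edge'' Euler factor $\bigl(1-(\tfrac{-d}{2})^{n+1}2^{-(n+1)}\bigr)^{-1}$, resp. $(1-2^{-n})^{-1}$ — precisely what one expects from adjoining the extra scale-$2$ Jordan layer to $L$.
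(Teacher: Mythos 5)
Your overall strategy (Jordan decomposition, local densities from \cite{Gan} in the unramified case and \cite{Cho} in the ramified case, the index from Lemma~\ref{index M}, and a kernel count for the reduction map) is the same as the paper's. But there is a genuine gap at the very first step: you assert that the invariant factors of the defining system for $M$ are ``again bounded by $2$'', so that Proposition~\ref{tau_p from balls} and Corollary~\ref{Hensel_cor} apply \emph{verbatim} with $\phi_1\colon SU(M,\mathcal{O}/2^3\mathcal{O})\to SU(M,\mathcal{O}/2^2\mathcal{O})$. This is exactly what fails. Since $\det M=-2$ is not a unit at $2$, both the stabilization threshold for the local density and the greatest invariant factor $e$ of the Jacobian change: the density stabilizes only at $N=2\nu_2(\det M)+1=3$ in the unramified case and at $N=2\nu_2(\det M)+3=5$ in the ramified case, and in the ramified case the paper must take $e=4$ and work with the map $\phi\colon SU(M,\mathcal{O}/2^5\mathcal{O})\to SU(M,\mathcal{O}/2^3\mathcal{O})$, whose image is then divided by $2^{3((n+1)^2-1)}$. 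With $e=4$, Theorem~\ref{Hensel} only guarantees that a solution mod $2^5$ lifts to a genuine $\mathbb{Z}_2$-point; a solution mod $2^3$ need not lift, so the image of your level-$8\to 4$ map can strictly exceed the set of reductions of genuine points, and your formula would then overcount $N_2$ by a power of $2$ in the $D=-4d$ rows. Correspondingly, your kernel computation must be carried out for $A=E+8B$ with $B$ modulo $4$ (the paper finds $|\ker\phi|=2^{2n^2+5n}$ there), not for $A=E+4B$ with $B$ modulo $2$.

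Two smaller points. First, in the ramified case the rank-one Jordan block $\langle-2\rangle$ has scale $\mathfrak{p}^2$ (the paper writes $M_0\oplus M_2$), not $\mathfrak{p}$ as in your parenthetical; since the subscript governs the types, the invariant $d_2$ and the cross-terms in \cite{Cho}, this is precisely the bookkeeping you flag as the danger and it must be gotten right. Second, your closing consistency check misstates the correction factor in the $D=-d$ rows: the ratio $\tau_2(G'_{\mathbb{Z}_2})/\tau_2(G_{\mathbb{Z}_2})$ is $\bigl(1-\bigl(\tfrac{-d}{2}\bigr)2^{-1}\bigr)\bigl(1-\bigl(\tfrac{-d}{2}\bigr)^{n+1}2^{-(n+1)}\bigr)^{-1}$ — the product $\prod_{i=2}^{n+1}$ for $L$ is replaced by $\prod_{i=1}^{n}$, so you gain the $i=1$ factor as well as losing the $i=n+1$ one. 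In the unramified case your scheme does reduce to the paper's (there the expected count $|\ker\phi_1|=2^{(n+1)^2-1}$ holds and dividing $|SU(M,\mathcal{O}/8\mathcal{O})|$ by $2^{3((n+1)^2-1)}$ gives the stated $\prod_{i=1}^{n}\bigl(1-\bigl(\tfrac{-d}{2}\bigr)^i2^{-i}\bigr)$), so the gap is concentrated in the two $D=-4d$ rows.
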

\begin{proof}
We first consider the case $D=-d$. 
We compute $\beta_2$ using the article \cite{Gan}. By definition, $\beta_2=\lim\limits_{N\to\infty}p^{-N(n+1)^2}\#U(M,\mathcal{O}_K/2^N\mathcal{O}_K)$. This limit stabilizes for $N\geq 2\nu_p(\det(M))+1$, so $N=3$. We need to do the following steps:
\begin{enumerate}
    \item Compute $\beta_2$ and $|U(M,\mathcal{O}/8\mathcal{O})|$ using the article \cite{Gan};
    \item Compute $|SU(M,\mathcal{O}/8\mathcal{O})|$ using lemma \ref{index M};
    \item Divide $|SU(M,\mathcal{O}/8\mathcal{O})|$ by $2^{3((n+1)^2-1)}$ (it follows from the proposition \ref{tau_p from balls}).
\end{enumerate}
The Jordan decomposition of $M$ is $M_0\oplus M_1$, $rk(M_0)=n$,  $rk(M_1)=1$ (using the notation from \cite{Gan}). We get $d_0=0,~d_1=1, ~d=1, ~\beta_2=2\cdot 2^{-n^2}\cdot 2^{-1}\cdot|G_0|\cdot|G_1|$. The group $G_0$ is $U(n)$ when $d\equiv  3 \pmod{8}$ and $GL(n)$ when $d\equiv - 1 \pmod{8}$. Similarly $G_1$ is $U(1)$ when $d\equiv  3 \pmod{8}$ and $GL(1)$ when $d\equiv - 1 \pmod{8}$. So, $|G_0|=2^{n^2}\prod\limits_{i=1}^{n}(1-\left( \frac{-d}{2}\right)^i 2^{-i})$, $|G_1|=\begin{cases} 3, ~d\equiv 3 \pmod{8} \\ 1, ~d\equiv -1 \pmod{8}
\end{cases}$. On the other hand, by definition $\beta_2=2^{-3(n+1)^2}|U(M,\mathcal{O}/8\mathcal{O})|$. Using that $[U(M,\mathcal{O}/8\mathcal{O}):SU(M,\mathcal{O}/8\mathcal{O})]=2^4(1-\left( \frac{-d}{2}\right)2^{-1})$, we get  $$|SU(M,\mathcal{O}/8\mathcal{O})|=2^{3((n+1)^2-1)}\prod\limits_{i=1}^{n}(1-\left( \frac{-d}{2}\right)^i 2^{-i}),$$ so $$\tau_2(G'_{\mathbb{Z}_2})=\prod\limits_{i=1}^{n}(1-\left( \frac{-d}{2}\right)^i 2^{-i}).$$ We note that this result doesn't depend on the parity of $n$.

Now consider the case $D=-4d$. We compute $\beta_2$ using the article \cite{Cho}. By definition, $\beta_2=\lim\limits_{N\to\infty}p^{-N(n+1)^2}\#U(M,\mathcal{O}_K/2^N\mathcal{O}_K)$. This limit stabilizes for $N\geq 2\nu_p(\det(M))+3$ (because $p=2$ is ramified), so $N=5$. We need to do the following steps:
\begin{enumerate}
    \item Compute $\beta_2$ and $|U(M,\mathcal{O}/32\mathcal{O})|$ using the article \cite{Cho};
    \item Compute $|SU(M,\mathcal{O}/32\mathcal{O})|$ using lemma \ref{index M};
    \item Consider $\phi:SU(M,\mathcal{O}/32\mathcal{O})\to SU(M,\mathcal{O}/8\mathcal{O})$. It follows from lemma \ref{Hensel} and corollary \ref{Hensel_cor} that $im(\phi)$ coincides with the order of $SU(M,\mathbb{Z}_2)/SU(M,\mathbb{Z}_2)^{(3)}$ (using the notations from proposition \ref{tau_p from balls}). So we need to compute the $|ker(\phi)|$ in order to compute $im(\phi)$.
    \item Divide $im(\phi)$ by $2^{3((n+1)^2-1)}$ (it follows from the proposition \ref{tau_p from balls}).
\end{enumerate}
The Jordan decomposition of $M$ is $M_0\oplus M_2$, $rk(M_0)=n$,  $rk(M_2)=1$. 

We first consider the case when $n$ is even. Using the notation from \cite{Cho} we see that the lattice $M_0$ has type $I^e$, the lattice $M_2$ has type $I^0$, $dim(B_0/Y_0)=n-2$, $dim(B_2/Y_2)=0$. Besides, $\beta=1$, $d_0=0$, $d_2=2\cdot 1 \cdot \frac{1-1}{2}=0$, $N=1$. So,
$$\beta_2=2\cdot 2^{-(n+1)^2}|\tilde{G}|, \textit{ where } |\tilde{G}|=2^m\cdot 2^\beta|Sp(n-2)|, ~m=(n+1)^2-dim(Sp(n-2));$$
$$m=(n+1)^2-\frac{(n-1)(n-2)}{2}, ~|Sp(n-2)|=2^{(\frac{n-2}{2})^2}\prod \limits_{i=1}^{\frac{n-2}{2}}(2^{2i}-1),$$
$$ \beta_2=2^2\prod\limits_{i=1}^{\frac{n-2}{2}}(1-2^{-2i}).$$

By definition $\beta_2=\lim\limits_{N\to \infty}2^{-Ndim(G)}|G'(A/\pi^NA)|$. Since $N=5$ we get $$2^{-5(n+1)^2}|U(M,\mathcal{O}/32\mathcal{O})|=2^2\prod\limits_{i=1}^{\frac{n-2}{2}}(1-2^{-2i}),$$ so $$|U(M,\mathcal{O}/32\mathcal{O})|=2^{5(n+1)^2+2}\prod\limits_{i=1}^{\frac{n-2}{2}}(1-2^{-2i}).$$

Using that $[U(M,\mathcal{O}/32\mathcal{O}):SU(M,\mathcal{O}/32\mathcal{O})]=2^{7}$, we obtain $$|SU(M,\mathcal{O}/32\mathcal{O})|=2^{5(n+1)^2-5}\prod\limits_{i=1}^{\frac{n-2}{2}}(1-2^{-2i}).$$

Now we need to compute $|ker(\phi)|$. It means we need to find the number of solutions $B$ of the system $\begin{cases}
-BM=M\bar{B'}\\
TrB=0
\end{cases}$ modulo $\mathcal{O}/4\mathcal{O}$. Taking $B=(b_{ij})$, we get: $\begin{cases} -b_{ii}=\bar{b}_{ii}, ~i\in\{1;n\} \\  -b_{ij}=\bar{b}_{ji}, ~i,j\in\{1;n\}\\
2b_{i,n+1}=\bar{b}_{n+1,i},~i\in\{1;n\}.
\end{cases}$
Each equation for $b_{ii}$ has $8$ solutions, and in the remaining equations one summand is uniquely determined by the other one, so it gives $16$ options for each pair $(b_{ij}, b_{ji})$. So $|ker(\phi)|=8^n\cdot 16^{\frac{n(n+1)}{2}}=2^{2n^2+5n}$.

So we get $$\tau_2=\frac{|SU(M,\mathcal{O}/32\mathcal{O})|}{|ker(\phi)|\cdot 2^{3((n+1)^2-1)}}=\frac{\prod\limits_{i=1}^{\frac{n-2}{2}}(1-2^{-2i})}{2^n}.$$

When $n$ is odd the proof is similar and the difference occurs only in computation of $\beta_2$. The Jordan decomposition of $M$ is $M_0\oplus M_2$, $rk(M_0)=n$,  $rk(M_2)=1$. Using the notation from the article \cite{Cho} the lattice $M_0$ has type $I^0$, the lattice $M_2$ has type $I^0$, $dim(B_0/Y_0)=n-1$, $dim(B_2/Y_2)=0$. Besides, $\beta=1$, $d_0=0$, $d_2=2\cdot 1 \cdot \frac{1-1}{2}=0$, $N=1$. We get:
$$\beta_2=2\cdot 2^{-(n+1)^2}|\tilde{G}|, \textit{ where } |\tilde{G}|=2^m\cdot 2^\beta|Sp(n-2)|, ~m=(n+1)^2-dim(Sp(n-1));$$
$$m=(n+1)^2-\frac{(n-1)(n)}{2}, ~|Sp(n-1)|=2^{(\frac{n-1}{2})^2}\prod \limits_{i=1}^{\frac{n-1}{2}}(2^{2i}-1),$$
$$ \beta_2=2^2\prod\limits_{i=1}^{\frac{n-1}{2}}(1-2^{-2i}).$$

Everything else is the same as in the case of even $n$, so we get:
 $$\tau_2=\frac{|SU(M,\mathcal{O}/32\mathcal{O})|}{|ker(\phi)|\cdot 2^{3((n+1)^2-1)}}=\frac{\prod\limits_{i=1}^{\frac{n-1}{2}}(1-2^{-2i})}{2^n}.$$
\end{proof}

\begin{lemma}
Let $p$ be odd such that $p|D$. Then $\tau_p(G'_{\mathbb{Z}_p}) = \prod\limits_{i=1}^{\frac{n}{2}}(1-p^{-2i})=\tau_{p}(G_{\mathbb{Z}_p})$ for even $n$ and $\tau_p(G'_{\mathbb{Z}_p})=(1-\left( \frac{(-1)^{\frac{n+3}{2}}\cdot 2}{p}\right) p^{-\frac{n+1}{2}})\prod\limits_{i=1}^{\frac{n-1}{2}}(1-p^{-2i})=\tau_{p}(G_{\mathbb{Z}_p})\cdot\frac{1-\left( \frac{(-1)^{\frac{n+3}{2}}\cdot 2}{p}\right) p^{-\frac{n+1}{2}}}{1-\left( \frac{(-1)^{\frac{n+3}{2}}}{p}\right) p^{-\frac{n+1}{2}}}$ for odd $n$.
\end{lemma}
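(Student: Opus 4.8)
The plan is to reduce this to a near-verbatim repetition of the computation in Lemma \ref{p div D}, the only change being the discriminant of the reduced quadratic form. First I would observe that since $p$ is odd and $p\mid D$, we have $p\mid d$; as $d$ is odd, $2$ is a unit in $\mathbb{Z}_p$, so the last diagonal entry $-2$ of $M$ is a $p$-adic unit. Hence $M_p$ is a unimodular hermitian $\mathcal{O}_{K,p}$-lattice — exactly like $L_p$, but carrying the unit $-2$ in place of $-1$ in the last slot. In particular the Jordan decomposition of $M$ at $p$ has a single component $M_0$ with $d_0=0$, and $\beta_p=\lim_{N}p^{-N(n+1)^2}\#U(M,\mathcal{O}_K/p^N\mathcal{O}_K)$ already stabilises at $N=2\nu_p(\det M)+1=1$. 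Since $p$ is ramified, the residue field is $\mathbb{F}_p$ and the involution is trivial modulo the ramified prime, so reduction turns the hermitian form $\mathrm{diag}(1,\ldots,1,-2)$ into a non-degenerate symmetric $\mathbb{F}_p$-bilinear form of rank $n+1$ and determinant $-2$.

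Next I would run the Gan computation of $\beta_p$ exactly as in Lemma \ref{p div D}, using \cite{Gan}: $\beta_p=p^{-\dim G_0}\#G_0(\mathbb{F}_p)$, where $G_0$ is the orthogonal group $O(n+1)$ of the space above, $\dim G_0=\frac{n(n+1)}{2}$. Combining with Lemma \ref{index} (for ramified $p$, $[U(M,\mathcal{O}/p\mathcal{O}):SU(M,\mathcal{O}/p\mathcal{O})]=2p$) gives $\#SU(M,\mathcal{O}_K/p\mathcal{O}_K)=\tfrac12\beta_p\,p^{(n+1)^2-1}$, hence $\tau_p(G'_{\mathbb{Z}_p})=\beta_p/2$.

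Then I would plug in the standard orders of finite orthogonal groups (\cite{Art}), distinguishing the parity of $n$. For even $n$, $n+1$ is odd, the order of $O(n+1,\mathbb{F}_p)$ does not depend on the discriminant, so the computation is identical to that for $L$ and yields $\tau_p(G'_{\mathbb{Z}_p})=\prod_{i=1}^{n/2}(1-p^{-2i})=\tau_p(G_{\mathbb{Z}_p})$. For odd $n$, $n+1$ is even and $\#O^{\pm}(n+1,\mathbb{F}_p)=2p^{n(n+1)/2}(1-\epsilon p^{-(n+1)/2})\prod_{i=1}^{(n-1)/2}(1-p^{-2i})$, with $\epsilon=+1$ iff the discriminant equals $(-1)^{(n+1)/2}$ up to squares. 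Here the discriminant is $-2$ rather than $-1$, so $\epsilon=\left(\frac{(-1)^{(n+1)/2}\cdot(-2)}{p}\right)=\left(\frac{(-1)^{(n+3)/2}\cdot 2}{p}\right)$, which differs from the value $\left(\frac{(-1)^{(n+3)/2}}{p}\right)$ of Lemma \ref{p div D} exactly by the factor $\left(\frac{2}{p}\right)$. Substituting gives $\tau_p(G'_{\mathbb{Z}_p})=\bigl(1-\left(\frac{(-1)^{(n+3)/2}\cdot 2}{p}\right)p^{-(n+1)/2}\bigr)\prod_{i=1}^{(n-1)/2}(1-p^{-2i})$, and dividing by the expression in Lemma \ref{p div D} produces the stated ratio $\frac{1-\left(\frac{(-1)^{(n+3)/2}\cdot 2}{p}\right)p^{-(n+1)/2}}{1-\left(\frac{(-1)^{(n+3)/2}}{p}\right)p^{-(n+1)/2}}$.

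The only genuinely delicate point — and hence the step to check most carefully — is the identification of the determinant and spinor type of the reduced form: one must make sure that reduction modulo the ramified prime $\mathfrak{p}$ sends $\mathrm{diag}(1,\ldots,1,-2)$ to the $\mathbb{F}_p$-quadratic form with determinant $-2$ (and not something twisted by $d$ or by the ramification parameter), so that only the Legendre symbol $\left(\frac{2}{p}\right)$ separates this case from the $L$-case. Everything else — the stabilisation index $N=1$, the single Jordan block with $d_0=0$, the index $[U:SU]=2p$, and the bookkeeping with $\prod(1-p^{-2i})$ — is literally the argument of Lemma \ref{p div D}.
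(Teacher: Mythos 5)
Your proposal is correct and follows essentially the same route as the paper: reduce to the computation of Lemma \ref{p div D} via $\tau_p(G'_{\mathbb{Z}_p})=\beta_p/2$, note that the reduction at the ramified odd prime gives an orthogonal group $O(n+1)$ over $\mathbb{F}_p$, and observe that only for odd $n$ does the order depend on the discriminant, where replacing $-1$ by $-2$ changes $\epsilon$ from $\left(\frac{(-1)^{(n+3)/2}}{p}\right)$ to $\left(\frac{(-1)^{(n+3)/2}\cdot 2}{p}\right)$. The preliminary observation that $2$ is a $p$-adic unit (so $M_p$ is unimodular with a single Jordan component) is a useful explicit justification that the paper leaves implicit.
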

\begin{proof}
As well as in the proof of lemma \ref{p div D} we notice that $\tau_p(G'_{\mathbb{Z}_p})=\frac{1}{2}\beta_p$ (using the notation from \cite{Gan}). It follows from this article that  $\beta_p=p^{-(n+1)^2}|G_0(\mathcal{O}/p\mathcal{O})|$. If $n$ is even then $G_0$ is an orthogonal group of type I or II, $|G_0(\mathcal{O}/p\mathcal{O})|=2p^{\frac{n^2}{4}}\prod\limits_{i=1}^{\frac{n}{2}}(p^{2i}-1)$. This is the same as for the lattice $L$ so the value of $\tau_p(G'_{\mathbb{Z}_p})$ is also the same. If $n$ is odd then $G_0$ is an orthogonal group of type III or IV, it's order is $|G_0(\mathcal{O}/p\mathcal{O})|=2p^{\frac{n^2-1}{4}}(p^{\frac{n+1}{2}}-\epsilon)\prod\limits_{i=1}^{\frac{n-1}{2}}(p^{2i}-1)$ where $\epsilon=-1$ if $G_0$ is of type III and $\epsilon=1$ if $G_0$ is of type IV. Type III means that the determinant of $M$ is $(-1)^{\frac{n+1}{2}}$, and type IV means that the determinant of $L$ is $\omega(-1)^{\frac{n+1}{2}}$, where $\omega$ is a quadratic nonresidue modulo $p$. Since the determinant of $M$ is $-2$, we get $\epsilon=\left( \frac{(-1)^{\frac{n+3}{2}}\cdot 2}{p} \right)$. For the lattice $L$ the value of corresponding $\epsilon$ was $\left( \frac{(-1)^{\frac{n+3}{2}}}{p} \right)$, so we get the statement of the lemma.
\end{proof}

Now we notice that $$Vol_{HM}(\mathbb{B}^n/\Gamma')=Vol_{HM}(\mathbb{B}^n/\Gamma)\cdot \frac{\tau_2(G_{\mathbb{Z}_2})}{\tau_2(G'_{\mathbb{Z}_2})}\cdot \prod\limits_{p|d}\frac{\tau_p(G_{\mathbb{Z}_p})}{\tau_p(G'_{\mathbb{Z}_p})}\cdot \frac{\det(B_M)}{\det(B_L)}.$$

The proportionality coefficient is listed in the table below. \newpage Substituting it into the above formula we get the statement of theorem \ref{M}.
\begin{table}[h]
\centering
	\begin{tabular}{|c|c|c|}
		\hline
		{\bf n} & {\bf D} & {\bf $\frac{\tau_2(G_{\mathbb{Z}_2})}{\tau_2(G'_{\mathbb{Z}_2})}\cdot \prod\limits_{p|d}\frac{\tau_p(G_{\mathbb{Z}_p})}{\tau_p(G'_{\mathbb{Z}_p})}\cdot \frac{\det(B_M)}{\det(B_L)}$} \\
	 	\hline
	 	{\bf even} &  {\bf -4d } & { $2^n\cdot (1-2^{-n})$}    \\
	 	\hline
	 	{\bf even} &  {\bf -d} & { $2^n\cdot \frac{1-\left(\frac{-d}{2} \right)^{n+1}2^{-(n+1)}}{1-\left(\frac{-d}{2} \right)2^{-1}}$}  \\
	 	\hline
	 	{\bf odd} &  {\bf -4d} & {$2^n\cdot \prod\limits_{p|d}\frac{1-\left( \frac{(-1)^{\frac{n+3}{2}}}{p}\right) p^{-\frac{n+1}{2}}}{1-\left( \frac{(-1)^{\frac{n+3}{2}}\cdot 2}{p}\right) p^{-\frac{n+1}{2}}}$ }   \\
	 	\hline
	 	{\bf odd} &  {\bf -d} & {$2^n\cdot \frac{1-\left(\frac{-d}{2} \right)^{n+1}2^{-(n+1)}}{1-\left(\frac{-d}{2} \right)2^{-1}}\cdot \prod\limits_{p|d} \frac{1-\left( \frac{(-1)^{\frac{n+3}{2}}}{p}\right) p^{-\frac{n+1}{2}}}{1-\left( \frac{(-1)^{\frac{n+3}{2}}\cdot 2}{p}\right) p^{-\frac{n+1}{2}}}$ } \\
	 	\hline
	\end{tabular}
\end{table}

\end{document}